\numberwithin{equation}{section}
\newtheorem{theorem}{Theorem}[section]
\newtheorem{lemma}[theorem]{Lemma}
\newtheorem{proposition}[theorem]{Proposition}
\theoremstyle{definition}
\newtheorem{definition}[theorem]{Definition}
\newtheorem{remark}[theorem]{Remark}
\newcommand{\ind}{1\hspace{-2.1mm}{1}}
\newcommand{\I}{\mathtt{i}}
\newcommand{\ff}{\mathrm{f}}
\newcommand{\g}{\mathrm{g}}
\newcommand{\RR}{\mathbb{R}}
\newcommand{\PP}{\mathbb{P}}
\newcommand{\EE}{\mathbb{E}}
\newcommand{\Dd}{\mathcal{D}}
\newcommand{\Cc}{\mathcal{C}}
\newcommand{\NN}{\mathbb{N}}
\newcommand{\MM}{\mathrm{M}}
\newcommand{\DdM}{\mathcal{D}_\MM}
\newcommand{\D}{\mathrm{d}}
\newcommand{\E}{\mathrm{e}}
\newcommand{\ZZ}{Z^{\MM}_{x,t}}
\newcommand{\QQ}{\mathbb{Q}^{\MM}_{x,t}}
\newcommand{\sgn}{\mathrm{sgn}}
\newcommand{\eps}{\varepsilon}
\newcommand{\be}{\begin{equation}}
\newcommand{\ee}{\end{equation}}
\def\equalDistrib{\,{\buildrel \Delta \over =}\,}
\begin{document}

\title{Two examples of non strictly convex large deviations}
\author{Stefano De Marco}
\address{CMAP, Ecole Polytechnique Paris}
\email{demarco@cmap.polytechnique.fr}
\author{Antoine Jacquier}
\address{Department of Mathematics, Imperial College London}
\email{a.jacquier@imperial.ac.uk}
\author{Patrick Roome}
\address{Department of Mathematics, Imperial College London}
\email{p.roome11@imperial.ac.uk}
\date{\today}
\thanks{
}
\keywords {large deviations, non-convex rate function, G\"artner-Ellis, stochastic processes}
\subjclass[2010]{60F10}
\thanks{AJ acknowledges financial support from the EPSRC First Grant EP/M008436/1.}

\begin{abstract}
We present two examples of a large deviations principle where the rate function is not strictly convex.
This is motivated by a model used in mathematical finance (the Heston model), 
and adds a new item to the zoology of non strictly convex large deviations.
For one of these examples, we show that the rate function of the Cram\'er-type of large deviations
coincides with that of the Freidlin-Wentzell when contraction principles are applied.
\end{abstract}

\maketitle

\section{Introduction}

The G\"artner-Ellis theorem is a key result in the theory of (finite-dimensional) large deviations.
Extending the results of Cram\'er~\cite{Cramer} 
for sequences of random variables not necessarily independent and identically distributed (iid), it provides a large deviations framework
solely based on the knowledge of the cumulant generating function (cgf) of the sequence.
The key assumptions are that the pointwise (rescaled) limit of these cgf satisfies some convexity property
and becomes steep at the boundaries of its effective domain;
this in turns implies that the rate function governing the large deviations, defined as the convex dual, 
is also convex.
Note that, by definition of the convex dual, essential smoothness of the limiting cgf 
implies strict convexity of the rate function.
When these assumptions are not met, large deviations 
(potentially with non strictly convex rate function) may or may not hold;
the classical example~\cite[Remark (d), page 45]{DZ93} is that of the sequence $(Z_n)_{n\in\mathbb{N}}$ 
distributed as exponential random variables with parameter~$n$.
It is immediate to see that 
$\Lambda(u):=\lim_{n\uparrow\infty}n^{-1}\log\EE(\E^{nu Z_n}) = 0$ if $u<1$ and is infinite otherwise.
Clearly, the function~$\Lambda$ is not essentially smooth (see Definition~\ref{def:EssSmooth}),
and the assumptions of the G\"artner-Ellis theorem, recalled in Appendix~\ref{app:LDP}, are violated;
nevertheless, a simple computation reveals that the conclusion of the latter still holds,
namely that a large deviations principle exists, with speed~$n$ and rate function 
$\Lambda^*(x):=\sup_{u}(ux-\Lambda(u)) = x$ if $x\geq 0$, and infinity otherwise;
clearly here, strict convexity of~$\Lambda^*$ does not hold.
Dembo and Zeitouni~\cite{DZ95} and Bryc and Dembo~\cite{BrycDembo}---in the context
of quadratic functionals of Gaussian processes---have proposed a way 
to bypass this absence of essential smoothness (of~$\Lambda$) / strict convexity (of~$\Lambda^*$) issue
by making the change of measure (key tool in the proof of the G\"artner-Ellis theorem)
time-dependent.
More recently, O'Brien~\cite{Brien} and Comman~\cite{Comman} have strengthened this theorem,
by partially relaxing the steepness and convexity assumptions.
In a general infinite-dimensional setting, Bryc's Theorem~\cite{Bryc} (see also~\cite[Chapter 4.4]{DZ93}), 
or `Inverse Varadhan's lemma', allows for large deviations with non convex rate functions. 
One of the hypotheses this theorem relies on is an exponential tightness requirement on the family of random variables under consideration, which is not always easy to verify.
However, several examples have been dug out which do not fall into this framework, such as in the setting of random walks with interface~\cite{Dupuis}, 
occupation measures of Markov chains~\cite{Jiang}, 
the on/off Weibull sojourn process~\cite{Duffy}, 
or $\mathrm{m}$-variate von Mises statistics~\cite{Eichel}.

Motivated by recent developments on large deviations in mathematical finance 
(see in particular~\cite{CDMD, DFJV, JR}, and the excellent review paper~\cite{Pham}), 
we study the small-time behaviour of the solution of the Feller stochastic differential equation 
(and an integral version of it) when the starting point is null. 
The absence of Lipschitz continuity of the diffusion coefficient and the degenerate starting condition 
make it not amenable to the classical Freidlin-Wentzell framework,
and the absence of strict convexity of the limiting moment generating function
violates the G\"artner-Ellis assumptions.
It turns out that a large deviations principle however holds, 
and one can furthermore reconcile the pathwise large deviations to the marginal (G\"artner-Ellis one)
by contraction.
We believe this provides a nice example of non-strictly-convex large deviations principle
in the context of continuous-time stochastic processes.
It also sheds light on the importance of the starting point of the SDE being null, 
as opposed to the non-zero case where the G\"artner-Ellis theorem applies directly (see~\cite{FJ09}).
In Section~\ref{sec:Results}, we present the model and state the large deviations results 
as time tends to zero; 
we also establish the connection with the Freidlin-Wentzell analysis via the contraction principle.
The proofs of the main results are gathered in Section~\ref{sec:Proofs}.\\
\textbf{Notations}: For a set $G\subset \RR^n$, 
we shall denote by $G^o$ and $\overline{G}$ its respective interior and closure in $\RR^n$.

\section{Main results}\label{sec:Results}

We consider here the following system of stochastic differential equations:
\begin{equation}\label{eq:Heston}
\begin{array}{rll}
\D X_t & = \displaystyle -\frac{1}{2}V_t\D t+ \sqrt{V_t}\D W_t, \quad & X_0=0,\\
\D V_t & = (a+b V_t)\D t+\xi\sqrt{V_t}\D Z_t, \quad & V_0=0,\\
\D\left\langle W,Z\right\rangle_t & = \rho \D t,
\end{array}
\end{equation}
where $a,\xi>0$, $b<0$, $|\rho|<1$ and $(W_t)_{t\geq0}$ and $(Z_t)_{t\geq0}$ are two standard Brownian motions.
We stress the importance of the parameter~$a$ to be strictly positive;
otherwise, the process~$V$, starting from zero, would just remain null,
and the unique solution of~\eqref{eq:Heston} would simply be the two-dimensional zero process.
We shall often make use of the notations $\bar{\rho}:=\sqrt{1-\rho^2}$ and $\mu:=2a/\xi^2$.
The SDE for the variance process $V$ has a unique strong solution 
by the Yamada-Watanabe conditions~\cite[Proposition 2.13, page 291]{KS97}. 
We further assume that $\mu > 1$, which ensures that the origin is unattainable for strictly positive times. 
Define now the following functions:
\begin{align} 
\Lambda_X^*(x) & = \left(u_- \ind_{\{x<0\}} + u_+\ind_{\{x\geq 0\}}\right) x, \label{eq:L*X}
\\
\Lambda_V^*(x) & = 
\left\{
\begin{array}{ll}
2x/\xi^2, & \text{if }x\geq 0,\\
+\infty, & \text{if }x<0.
\end{array}
\right. \label{eq:L*V}
\end{align}
for all $x\in\mathbb{R}$, where the two real numbers $u_-$ and $u_+$ read
\begin{equation}\label{eq:DefUpm}
\left.
\begin{array}{ll}
u_- & := \displaystyle \frac{2}{\xi\bar{\rho}}\arctan\left(\frac{\bar{\rho}}{\rho}\right)\ind_{\left\{\rho<0\right\}}
-\frac{\pi}{\xi}\ind_{\left\{\rho=0\right\}}
+\frac{2}{\xi\bar{\rho}}\left(\arctan\left(\frac{\bar{\rho}}{\rho}\right)-\pi\right)\ind_{\left\{\rho>0\right\}},
\\
u_+ & := \displaystyle 
\frac{2}{\xi\bar{\rho}}\arctan\left(\frac{\bar{\rho}}{\rho}\right)\ind_{\left\{\rho>0\right\}}
+\frac{\pi}{\xi}\ind_{\left\{\rho=0\right\}}
+\frac{2}{\xi\bar{\rho}}\left(\arctan\left(\frac{\bar{\rho}}{\rho}\right)+\pi\right)\ind_{\left\{\rho<0\right\}}.
\end{array}
\right.
\end{equation}
Note that $u_-$ (resp. $u_+$) is a decreasing (resp. decreasing) function of~$\rho$ and maps the interval $(-1,1)$ to $(-\infty,-2/\xi)$ (resp. $(2/\xi,+\infty)$).
We shall use the subscript/superscript $\MM$ to represent the quantities related to $X$ or to $V$. 
For instance $\Lambda^*_\MM$ represent $\Lambda^*_X$ or $\Lambda^*_V$.
We also denote $\mathcal{K}_{X}:=\mathbb{R}\setminus\{0\}$ and $\mathcal{K}_{V}:=(0,\infty)$.

\subsection{Large deviations results}
The main result of this paper is the following theorem, 
which provides an example of a sequence of random variables for which 
the limiting logarithmic cumulant generating function is zero (on its effective domain) but a large deviations principle still holds.
This is to be compared to the G\"artner-Ellis theorem~\cite[Theorem 2.3.6]{DZ93} which requires this limiting
function to be steep at the boundaries of its effective domain.
As highlighted in the proof, understanding the pointwise limit of the (rescaled) cumulant generating function
does not suffice any longer, and its higher-order behaviour is needed to prove large deviations.

\begin{theorem}\label{thm:LDP}
Whenever $\MM = X$ or $\MM = V$, the family $(\MM_{t})_{t\geq0}$ satisfies a LDP with speed~$t$ and rate function $\Lambda_\MM^*$ as $t$ tends to zero.
\end{theorem}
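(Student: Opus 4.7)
The plan is to exploit the closed-form expressions for the Laplace transforms of $V_t$ and $X_t$ afforded by the affine structure of the Heston model. Because $V_0 = 0$, the marginal law of $V_t$ is exactly Gamma with shape $\mu$ and scale $\theta(t) = \frac{\xi^2}{2b}(\E^{bt}-1) = \frac{\xi^2}{2}t + O(t^2)$, so that $\EE[\E^{u V_t/t}] = (1 - u\theta(t)/t)^{-\mu}$ is finite iff $u < t/\theta(t)$. The analogous Heston formula yields $\EE[\E^{u X_t}] = \exp(\mu \Phi(t,u))$ with $\Phi(t,\cdot)$ analytic on an interval $(\underline{u}(t), \overline{u}(t))$ whose endpoints converge, as $t\to 0$, to the values $u_\pm$ given in~\eqref{eq:DefUpm}; these are precisely the boundary values of $u$ beyond which the Heston moment explosion time is zero.

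A direct computation based on these explicit expressions shows that $\Lambda_\MM(u) := \lim_{t\to 0} t\log\EE[\E^{u\MM_t/t}]$ equals $0$ on the interior of its effective domain and $+\infty$ outside, exactly mirroring the $\mathrm{Exp}(n)$ example recalled in the introduction. The G\"artner-Ellis theorem therefore does not apply, since $\Lambda_\MM$ fails to be essentially smooth; nonetheless, the candidate rate function remains its Fenchel dual $\Lambda_\MM^*$ from~\eqref{eq:L*X}--\eqref{eq:L*V}, and one must prove the LDP directly.

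For the upper bound, I apply Chebyshev's inequality: for any $x>0$ and any $u$ in the interior of the effective domain,
\[
t \log \PP(\MM_t \geq x) \,\leq\, -ux + t\log\EE[\E^{u\MM_t/t}] \,=\, -ux + o(1);
\]
letting $u$ increase to the right endpoint of the effective domain yields $\limsup_{t\to 0} t\log\PP(\MM_t \geq x) \leq -\Lambda_\MM^*(x)$. The symmetric argument handles the left tail of $X$, while the left tail of $V$ is immediate from $V\geq 0$; the LDP upper bound on closed sets then follows in the standard way. Crucially, the absence of steepness plays no role at this stage.

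The genuine obstacle lies in the lower bound: since $\Lambda_\MM$ vanishes on the interior of its domain, there is no interior point at which to apply the exponential tilting that drives the G\"artner-Ellis lower bound. For $V$, the exact Gamma distribution rescues us: the asymptotic $\Gamma(\mu, y)/\Gamma(\mu) \sim y^{\mu-1}\E^{-y}/\Gamma(\mu)$ as $y\to\infty$, applied to $y = x/\theta(t)$, immediately yields $\lim_{t\to 0} t\log\PP(V_t > x) = -2x/\xi^2 = -\Lambda_V^*(x)$, from which the LDP lower bound on open subsets of $(0,\infty)$ follows. For $X$ no such explicit density is available, so I would proceed via sharp Fourier inversion of the characteristic function (or via an Esscher-type tilt with $u$ close to $u_\pm$) followed by a saddle-point analysis of the resulting integral, exploiting the \emph{subleading} behaviour of $\log\EE[\E^{u X_t/t}]$ rather than its vanishing leading-order limit. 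This step -- extracting sharp exponential rates from the higher-order expansion of the mgf as the tilt approaches the boundary of its effective domain -- is the most delicate part of the argument.
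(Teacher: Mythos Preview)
Your proposal is correct, and on the decisive point---the lower bound for $X$---it coincides with the paper's method: a time-dependent Esscher tilt at the saddlepoint $u^*_X(x,t)$ solving $\partial_u\Lambda_X(u,t)=x$ (which converges to $u_\pm$ as $t\downarrow 0$), followed by Fourier inversion and an expansion of the cgf to subleading order in~$t$. The organizational difference is that you split the argument into an elementary Chebyshev upper bound and a separate lower-bound analysis, whereas the paper runs the tilted-measure computation once for both tails and extracts the sharp asymptotic~\eqref{eq:PPMx} directly, from which the full LDP follows. Your route is more economical for the upper bound (Chebyshev requires nothing beyond Lemma~\ref{lemma:LimitCGF}), while the paper's unified treatment delivers the stronger sharp statement~\eqref{eq:PPMx} as a byproduct; for $V$, both you and the paper observe that the exact Gamma law makes the analysis elementary.
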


A more in-depth analysis reveals a more precise behaviour of the small-time probabilities, 
which take the following form as $t$ tends to zero:
\begin{equation}\label{eq:PPMx}
\PP(\MM_{t}\geq x)=
\left\{
\begin{array}{ll}
\displaystyle 1 - C(x)t^{1-\mu}\exp\left(-\frac{\Lambda_\MM^*(x)}{t}\right)
\left(1+\mathcal{O}(t)\right),
\qquad & \text{if } x<0,\\
\displaystyle C(x)t^{1-\mu}\exp\left(-\frac{\Lambda_\MM^*(x)}{t}\right)
\left(1+\mathcal{O}(t)\right),
\qquad & \text{if } x > 0,
\end{array}
\right.
\end{equation}
for all $x\in\mathcal{K}_{\MM}$, for some (smooth) strictly positive function~$C$.
This analysis is based on the so-called theory of sharp large deviations, 
developed in~\cite{Bercu, Bercu2},
and used in~\cite{Bercu3, JR, JR2} for diffusion processes and statistical estimators thereof.
It is based on refinements of the G\"artner-Ellis theorem in the case where the limiting cgf is not 
steep at the boundary; 
these refinements, using a time-dependent change of measure, were introduced in~\cite{BrycDembo}
and~\cite{DZ95}.
For in general $1-\mu \neq \frac{1}{2}$, we incidentally note that the factor $t^{1-\mu}$ 
here is not in line with the classical factor~$t^{1/2}$ found in heat kernel expansions for 
(the tail of the cumulative distribution function of) elliptic diffusions.

For the process~$V$, such an analysis, based on the moment generating function, is not really required, 
since the density of~$V_t$ is known explicitly for each~$t\geq 0$. 
Indeed, according to~\eqref{eq:cgfMg} below, its moment generating function reads
$$
\mathbb{E}\left(\E^{uV_t}\right) = \left(1+\frac{u\xi^2}{2b}\left(1-\E^{bt}\right)\right)^{-\mu}
 = \left(\frac{\lambda_t}{\lambda_t - u}\right)^{\mu},
$$
where $\lambda_t:=-\frac{2b}{\xi^2\left(1-\E^{bt}\right)}$, for all $u<\lambda_t$.
Hence~$V_t$ is distributed as a Gamma random variable with shape~$\mu$ 
and rate~$\lambda_t$.
Therefore, for any $x>0$, 
\begin{align*}
\PP(V_t \geq x)
 & = \frac{\lambda_t^\mu}{\Gamma(\mu)}\int_{x}^{+\infty}z^{\mu-1}\E^{-\lambda_t z}\D z
 = \frac{1}{\Gamma(\mu)}\int_{\lambda_t x}^{+\infty}z^{\mu-1}\E^{-z}\D z
 = \frac{(\lambda_t x)^{\mu-1}\E^{-\lambda_t x}}{\Gamma(\mu)}\left[1+\mathcal{O}\left(\frac{1}{\lambda_t x}\right)\right]\\
 & = \frac{\exp\left(\frac{bx}{\xi^2}\right)}{\Gamma(\mu)}
\left(\frac{2x}{\xi^{2}}\right)^{\mu-1}t^{1-\mu}\exp\left(-\frac{2x}{\xi^2 t}\right)\left[1+\mathcal{O}(t)\right],
\end{align*}
by the properties of the complementary incomplete Gamma function~\cite[Chapter 4, Section 2.1]{Olver},
and since $\lambda_t = \frac{2}{\xi^2 t} - \frac{b}{\xi^2} + \mathcal{O}(t)$;
therefore~\eqref{eq:PPMx} follows for~$\MM = V$.
In the proof of Theorem~\ref{thm:LDP} below, we will however keep the notation $\MM$,
standing for both $X$ and $V$, in order to highlight the fact that an analogous proof holds in both cases.

\subsection{Intuitions from Freidlin-Wentzell analysis}\label{sec:FW}
The proof of Theorem~\ref{thm:LDP} will be given in Section~\ref{sec:Proofs}.
In this section, we wish to illustrate how piece-wise linear rate functions such as~\eqref{eq:L*V} 
can arise from sample-path large deviations.
In order to simplify the framework, consider here the solution~$V$ 
of the equation obtained from~\eqref{eq:Heston} by setting $V_0 = v_0 >0$ and $a = b = 0$.
Setting $V^{\eps}_t := V_{\eps^2 t}$ for $\eps > 0$, the process $(V^{\eps}_t)_{t \ge 0}$ is the (weak) solution of the equation
\be \label{e:smallNoiseSimplified}
\D V^{\eps}_t  = \eps \xi\sqrt{V_t^\eps}\D Z_t, \qquad V^{\eps}_0=v_0 >0.
\ee
Pathwise large deviations (as $\eps$ tends to zero) for the solution of this SDE fall outside the scope 
of the classical Freidlin-Wentzell framework (as presented in~\cite[Chapter 5.6]{DZ93}) since the diffusion coefficient lacks the required 
global Lipschitz continuity property.
Based on properties of Bessel processes, Donati-Martin et al. in~\cite{DonatiYor} 
proved that, for every $T>0$, the process $(V^{\eps}_t)_{t \in [0,T]}$ does satisfy a large deviations principle 
on the path space $\Cc_T = C([0,T];\RR_+)$ of non-negative continuous functions,
with speed~$\eps^2$ and rate function~$I_T$ given by
\be \label{e:pathRateFct}
I_T(\varphi) = \left\{
\begin{array}{ll}
\displaystyle \frac{1}{2} \int_0^T \frac{\dot{\varphi}_t^2}{\xi^2 \varphi_t} \ind_{\{\varphi_t > 0\}} \D t,
& \text{if $\varphi \in \mathcal{C}_T$ is absolutely continuous and $\varphi_0 = v_0$}
\\
+\infty, & \text{otherwise},
\end{array} \right.
\ee
where one sets $y^{-1}\ind_{\{y > 0\}}=0$ when $y=0$.
More precisely, this means that the estimates
\be \label{e:pathLDP}
-\inf_{\varphi \in G^o} I_T(\varphi) \leq \liminf_{\eps \downarrow 0} \eps^2 \log \PP(V^{\eps} \in G)
\leq \limsup_{\eps \downarrow 0} \eps^2 \log \PP(V^{\eps} \in G) \leq -\inf_{\varphi \in \overline{G}} I_T(\varphi)
\ee
hold for every Borel set~$G\subset\Cc_T$.
By the contraction principle~\cite[Theorem 4.2.1]{DZ93}, the path estimates~\eqref{e:pathLDP} 
induce a LDP on~$\RR$ for the random variable $V^{\eps}_1 \equalDistrib V_{\eps^2}$, 
where the rate function is now given by
\be \label{e:rateFct}
\Lambda^*(x) := \inf\left\{I_1(\varphi): \varphi \in \Cc_1, \varphi_0=v_0, \varphi_1 = x\right\}.
\ee
This means that the sequence $(V_t)_{t\geq 0}$ 
satisfies a LDP with speed~$t$ as~$t$ tends to zero, namely for every Borel set~$A\subset\RR$,
$$
-\inf_{x \in A^o} \Lambda^*(x)
\leq \liminf_{t \downarrow 0} t \log \PP(V_t \in A)
\leq\limsup_{t \downarrow 0} t \log \PP(V_t \in A)
 \le -\inf_{x \in\overline{A}} \Lambda^*(x).
$$
\begin{proposition} \label{p:rateFcomput}
The rate function~$\Lambda^*$ in~\eqref{e:rateFct} reads
\[
\Lambda^*(x) =
\left\{ \begin{array}{ll}
\displaystyle \frac 2 {\xi^2} \left(\sqrt x - \sqrt{v_0}\right)^2, & \text{if } x\geq 0,\\
+\infty, & \text{if } x<0.
\end{array}
\right.
\]
\end{proposition}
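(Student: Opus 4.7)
The plan is to reduce the variational problem to a standard one via the Lamperti-type substitution $\psi_t := \sqrt{\varphi_t}$. For any admissible $\varphi$ (absolutely continuous, non-negative, with $\varphi_0 = v_0$), the chain rule applies on the open set $\{\varphi_t > 0\}$ and gives $\dot\psi_t = \dot\varphi_t/(2\sqrt{\varphi_t})$ almost everywhere there, so that
\[
 I_1(\varphi) \;=\; \frac{2}{\xi^2}\int_0^1 \dot\psi_t^{\,2}\,\ind_{\{\varphi_t>0\}}\D t.
\]
When $x < 0$, no $\varphi \in \Cc_1 = C([0,1];\RR_+)$ can satisfy $\varphi_1 = x$, so $\Lambda^*(x) = +\infty$ as the infimum over the empty set.

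When $x \geq 0$, the upper bound comes from the explicit candidate $\varphi^*_t := (\sqrt{v_0} + t(\sqrt{x}-\sqrt{v_0}))^2$, which is absolutely continuous, non-negative, satisfies $\varphi^*_0 = v_0$ and $\varphi^*_1 = x$, and whose square root $\psi^*_t = \sqrt{v_0} + t(\sqrt{x}-\sqrt{v_0})$ has constant derivative; a direct substitution gives $I_1(\varphi^*) = \tfrac{2}{\xi^2}(\sqrt{x}-\sqrt{v_0})^2$.

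For the matching lower bound, I apply the Cauchy--Schwarz inequality to $\int \dot\psi_t^{\,2}\D t$ piecewise on the connected components of $\{\varphi_t > 0\}$. If $\varphi$ stays strictly positive on $[0,1]$, a single application on the whole interval yields $\int_0^1 \dot\psi_t^{\,2}\D t \ge (\psi_1 - \psi_0)^2 = (\sqrt x - \sqrt{v_0})^2$. If $\varphi$ touches zero, set $b_0 := \inf\{t : \varphi_t = 0\} \in (0,1]$ and (when $x>0$) $a_1 := \sup\{t < 1 : \varphi_t = 0\} \in [b_0, 1)$; by continuity of $\psi$, one has $\psi_{b_0} = \psi_{a_1} = 0$, $\psi_0 = \sqrt{v_0}$ and $\psi_1 = \sqrt x$. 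Applying Cauchy--Schwarz separately on $[0,b_0]$ and $[a_1,1]$ (through an approximation on $[0, b_0-\eps]$ and $[a_1+\eps, 1]$, since $\sqrt\cdot$ is not Lipschitz at zero, followed by $\eps\downarrow 0$) gives
\[
 \int_0^1 \dot\psi_t^{\,2}\,\ind_{\{\varphi_t>0\}}\D t \;\geq\; \frac{v_0}{b_0} + \frac{x}{1-a_1} \;\geq\; v_0 + x \;\geq\; (\sqrt x - \sqrt{v_0})^2,
\]
the last inequality being the trivial $2\sqrt{xv_0}\geq 0$. This closes the two bounds.

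The main obstacle is the careful treatment of trajectories hitting zero inside $(0,1)$: the indicator $\ind_{\{\varphi_t>0\}}$ removes the singular contribution of $\dot\varphi_t^{\,2}/\varphi_t$ at those times, which in principle could allow a path to cheaply descend from $v_0$ to $0$ and then climb back up to $x$; the piecewise Cauchy--Schwarz estimate above shows that such a strategy still costs at least $v_0 + x \ge (\sqrt x - \sqrt{v_0})^2$, so the straight-line path in the $\psi$-coordinate remains optimal.
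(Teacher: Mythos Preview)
Your proof is correct and uses the same Lamperti substitution $\psi=\sqrt{\varphi}$ as the paper, but the execution of the lower bound is different. The paper invests effort in showing that $\psi$ is absolutely continuous on the \emph{whole} interval $[0,1]$ (including across the zero set of~$\varphi$), invoking chain-rule and characterisation results for AC functions from~\cite{Leoni}; this recasts the problem exactly as $\inf\{\frac{2}{\xi^2}\int_0^1\dot\psi_t^2\,\D t:\psi\in AC_+([0,1]),\ \psi_0=\sqrt{v_0},\ \psi_1=\sqrt x\}$, whose minimiser is the straight line. You instead bypass the global regularity of~$\psi$ entirely: the upper bound comes from the explicit candidate, and the lower bound from a piecewise Cauchy--Schwarz on $[0,b_0]$ and $[a_1,1]$ (with the $\eps$-approximation to stay in the region where $\sqrt{\cdot}$ is Lipschitz), followed by the elementary chain $v_0/b_0+x/(1-a_1)\ge v_0+x\ge(\sqrt x-\sqrt{v_0})^2$. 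Your route is more self-contained---no external regularity lemma is needed---at the price of a short case analysis; the paper's route identifies the two variational problems cleanly but leans on cited results. Both are valid and of comparable length.
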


For every $v_0>0$, $\Lambda^*$ is a strictly convex function on the positive real axis ($\partial_{xx}\Lambda^*(x)=\sqrt{v_0}/(2x^{3/2})$), 
converging pointwise to the affine function~$\Lambda^*_V(x) = 2 x / \xi^2$ 
given in~\eqref{eq:L*V} as $v_0$ tends to zero.
Note we are not claiming here (with the choice $a=0$ we made at the beginning of this section) 
that $\Lambda^*_V$ is the rate function for~$V^{\eps}_1$ in~\eqref{e:smallNoiseSimplified} when $v_0=0$: 
of course, in this case the unique solution to~\eqref{e:smallNoiseSimplified} is the identically null process $V^{\eps}\equiv 0$ 
(hence, the family $V^{\eps}_1$ satisfies a LDP with the trivial rate function~$\overline{I}(\varphi)=0$ 
if $\varphi\equiv 0$, and $\overline{I}(\varphi)=+\infty$ otherwise).

\begin{proof}[Proof of Proposition~\ref{p:rateFcomput}]
Let~$AC_+([0,1])$ denote the set of absolutely continuous functions on~$[0,1]$.
If~$x<0$, then by definition of~$I_1$, one has $I_1(\varphi)=+\infty$ for any~$\varphi$ such that~$\varphi_1=x$.
Then assume~$x\ge 0$, and consider~$\varphi \in \Cc_1$ such that~$\varphi_1=x$ and $I(\varphi)<+\infty$.
By the superposition principle (or the chain rule for absolutely continuous functions, 
see~\cite[Theorem 3.68]{Leoni}), the function $\psi \equiv \sqrt{\varphi}$ is absolutely continuous on every interval contained in the open set $\{\varphi>0\}$, 
with derivative almost surely equal to $\frac{\dot{\varphi}_t}{2\sqrt{\varphi_t}}\in L^2([0,1])$.
On $\{\varphi=0\}$ one has $\psi\equiv 0$, therefore $\dot{\psi}_t=0$ 
for every accumulation point of $\{\varphi=0\}$ (the isolated points form a finite subset of $[0,1]$).
In summary, it follows from~\cite[Corollary 3.26]{Leoni} that $\psi \in AC_+([0,1])$, and that 
$$
\int_0^1 \frac{\dot{\varphi}_t^2}{\varphi_t} \ind_{\{\varphi_t > 0\}} \D t 
= 4\int_{\{\varphi > 0\}} \dot{\psi}_t^2 \D t = 4\int_0^1 \dot{\psi}_t^2 \D t.
$$
Conversely, let $\psi \in AC_+([0,1])$ be such that $\dot{\psi}\in L^2([0,1])$, and set $\varphi \equiv \psi^2$; 
as the composition of a $C^1$ function and an absolutely continuous one, $\varphi$ also belongs to $AC_+([0,1])$ 
and $\dot{\varphi}_t = 2 \psi_t \dot{\psi}_t=2 \sqrt{\varphi_t} \dot{\psi}_t$ a.s.
Therefore, 
\be \label{e:chgV}
\begin{aligned}
\Lambda^*(x) =
 & \inf\left\{\frac1{2\xi^2} \int_0^1 \frac{\dot{\varphi}_t^2}{\varphi_t} \ind_{\{\varphi_t > 0\}} \D t
 : \varphi \in AC_+([0,1]) \mbox{ and } \varphi_0 = v_0, \varphi_1 = x\right\}
\\
 & = \inf\left\{\frac2{\xi^2} \int_0^1 \dot{\psi}_t^2 \D t: 
 \psi \in AC_+([0,1]) \mbox{ and } \psi_0 = \sqrt{v_0}, \psi_1 = \sqrt{x} \right\}.
\end{aligned} 
\ee
It is well known that the last problem is solved by the straight line 
$\psi^*_t \equiv \sqrt{v_0} + t(\sqrt x -\sqrt{v_0})$.
Substitution into~\eqref{e:chgV} yields 
$\Lambda^*(x)=\frac2{\xi^2} \int_0^1 (\dot{\psi}^*_t)^2=\frac2{\xi^2} (\sqrt x -\sqrt{v_0})^2$, 
and the proposition follows.
\end{proof}

Returning to the small-time problem for the solution~$V_t$ of~\eqref{eq:Heston}, now set~$V^{\eps}_t := V_{\eps^2 t}$, which satisfies
\be \label{e:smallTime}
\D V^{\eps}_t  = \eps^2(a + b V^{\eps}_t)\D t + \eps \xi\sqrt{V^{\eps}_t}\D Z_t, 
\qquad V^{\eps}_0=0.
\ee
To our knowledge, large deviations for the solution to~\eqref{e:smallTime} are not covered by the existing literature (in~\cite{DonatiYor}, 
the situation where the drift $a+bV$ is independent of~$\eps$ is considered).
We leave it to future research to prove that a pathwise LDP holds for the solution to~\eqref{e:smallTime} 
with a rate function analogous to~\eqref{e:pathRateFct}. Let us point out here that a pathwise LDP for the solution of the SDE
\[
\D \widetilde{V}^{\eps}_t = \eps^2 a(V^{\eps}_t)\D t + \eps \xi\sqrt{\widetilde{V}^{\eps}_t}\D Z_t
\]
starting at $\widetilde{V}^{\eps}_0=\eps^2 v_0$, was proven in~\cite[Theorem 1.1]{CDMD}.
Comparing with~\eqref{e:smallTime}, note that the initial condition is strictly positive, but tends to zero as~$\eps$ tends to zero; 
the condition $b=0$ is assumed, but the coefficient $a$ is allowed to be a bounded and Lipschitz continuous function with values in $\RR_+$.
\begin{remark}
The resulting rate function for the process $\widetilde V^{\eps}$ is identical to~$I_T(\varphi)$ given in~\eqref{e:pathRateFct}, 
albeit now with initial condition $\varphi_0 = 0$, for any choice of $a$ within this class.
Following analogous arguments to the proof of Proposition~\ref{p:rateFcomput}, 
the contraction principle applied to~$I_T$ yields exactly the rate function~\eqref{eq:L*V} 
for the family~$\widetilde{V}^{\eps}_1$, in line with what one expects from Theorem~\ref{thm:LDP}.
\end{remark}

\section{Proof of Theorem~\ref{thm:LDP}}\label{sec:Proofs}
The standard method to prove large deviations~\cite{DZ93} is to first prove an upper bound for the $\limsup$,
and then prove a lower bound for the $\liminf$, for the logarithmic probability of all Borel subsets of the real line.
We prove here directly that the limit holds for all open intervals of the form $(x,\infty)$ for $x\in \RR$,
which is clearly sufficient.
For any $t\geq 0$ and $\MM\in \{X, V\}$, define the rescaled cumulant generating function (cgf) $\Lambda_\MM(\cdot,t)$ 
of the random variable $\MM_t$ and its effective domain $\Dd_t^\MM$ by
\begin{equation}\label{eq:cgf}
\Lambda_\MM(u,t) := t \log\EE\left(\E^{u \MM_t/t}\right),\qquad
 \text{for all } u\in \Dd_t^\MM:=\left\{u\in\RR:|\Lambda_\MM(u,t)|<\infty\right\},\\
\end{equation}
Define further $\DdM:=\cap_{t>0}\Dd_t^\MM$.
From~\cite[Part I, Section 6.3.4]{JYCBook}, we know that 
\begin{equation}\label{eq:cgfMg}
\begin{array}{rlrl}
\Lambda_\MM(u,t) & = 
\displaystyle -\frac{\mu t }{2}\left[g_t^{\MM}(u) + 2\log f_t^{\MM}(u)\right],
\qquad  & \displaystyle f_t^{X}(u) & \equiv \displaystyle 
\cosh\left(\frac{d(\frac{u}{t})t}{2}\right)-\frac{ g_t^{X}(u)}{td(\frac{u}{t})}\sinh\left(\frac{d(\frac{u}{t})t}{2}\right), \\
f_t^{V}(u) & \equiv \displaystyle 1+\frac{u\xi^2}{2b t}\left(1-\E^{bt}\right),
\qquad  & \displaystyle g_t^{X}(u) & \equiv bt+\rho\xi u,
\qquad g_t^{V}(u) \equiv 0,
\end{array}
\end{equation}
where 
$d(u) \equiv [(b + \rho\xi u)^2+u(1-u)\xi^2]^{1/2}$, 
so that the functions $\Lambda_X(\cdot, t)$ and $\Lambda_V(\cdot, t)$ are 
explicitly well defined on $\mathcal{D}^X_{t}$ and $\mathcal{D}^V_{t}$.
The pointwise limit functions
$\Lambda_\MM(u):=\lim_{t\downarrow 0}\Lambda_\MM(u,t)$, for $\MM\in\{X,V\}$,
read as follows:
\begin{lemma} \label{lemma:LimitCGF}\text{}
The function $\Lambda_\MM$ is null on $\Dd_\MM$ and infinite outside, 
with $\Dd_X = (u_-,u_+)$ and $\Dd_V = (-\infty,2/\xi^2)$.
\end{lemma}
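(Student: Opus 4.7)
The plan is to substitute into the explicit expressions of \eqref{eq:cgfMg} and track the $t \downarrow 0$ limit carefully, treating $\MM = V$ and $\MM = X$ separately: the $V$-case reduces to a one-line Taylor expansion, while the $X$-case requires a more delicate analysis to identify the effective domain.

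For $\MM = V$, the identity $g_t^V \equiv 0$ reduces the problem to computing $\Lambda_V(u,t) = -\mu t \log f_t^V(u)$. Expanding $(1-\E^{bt})/(bt)=-1+\mathcal{O}(t)$ yields $f_t^V(u)=1-u\xi^2/2 + \mathcal{O}(t)$. When $u<2/\xi^2$ the limit is strictly positive, so $\log f_t^V(u)$ stays bounded and $\Lambda_V(u,t)\to 0$; when $u>2/\xi^2$, the finiteness condition $u/t<\lambda_t$ of the Gamma mgf fails for small $t$ since $t\lambda_t = 2/\xi^2 + \mathcal{O}(t)$, whence $\Lambda_V(u,t) = +\infty$.

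For $\MM = X$, I would rewrite $\Lambda_X(u,t) = -\frac{\mu t}{2}(bt+\rho\xi u) - \mu t \log f_t^X(u)$; the first summand is $\mathcal{O}(t)$ and vanishes. For the second, the key computation is
\[
(d(u/t)\,t)^2 \;=\; (bt+\rho\xi u)^2 + u(t-u)\xi^2 \;=\; -\bar\rho^2\xi^2 u^2 + \mathcal{O}(t),
\]
which is strictly negative for small $t$ whenever $u\ne 0$, so $d(u/t)\,t/2 = \I\bar\rho\xi|u|/2 + \mathcal{O}(t)$. Substituting $\cosh(\I y)=\cos y$ and $\sinh(\I y)=\I\sin y$ in the expression for $f_t^X(u)$ and simplifying (the factor $\sgn(u)$ produced by $|u|$ absorbs into $\sin$ thanks to its oddness) gives
\[
f_t^X(u) \;\xrightarrow[t\downarrow 0]{}\; F(u) \;:=\; \cos\!\left(\tfrac{\bar\rho\xi u}{2}\right) - \frac{\rho}{\bar\rho}\sin\!\left(\tfrac{\bar\rho\xi u}{2}\right) \;=\; \frac{1}{\bar\rho}\cos\!\left(\tfrac{\bar\rho\xi u}{2}+\arctan\tfrac{\rho}{\bar\rho}\right),
\]
pointwise and uniformly on compacts of $\RR\setminus\{0\}$. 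A case-by-case analysis on the sign of $\rho$, using $\arctan(x)+\arctan(1/x)=\frac{\pi}{2}\sgn(x)$, identifies the first zeros of $F$ on either side of the origin as exactly $u_\pm$ from \eqref{eq:DefUpm}. For $u\in(u_-,u_+)$, $F(u)>0$, so $\log f_t^X(u)\to \log F(u)$ is bounded in $t$ and $\Lambda_X(u,t)\to 0$.

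The hard part will be the complementary case $u\notin[u_-,u_+]$: one must show that $\Lambda_X(u,t)=+\infty$ for small $t$ rather than merely that the closed form ceases to be real. Here I would invoke the standard moment-explosion picture for Heston, which identifies the effective domain $\Dd_t^X$ as the maximal interval around $0$ on which $f_t^X>0$, whose boundaries $u_\pm^{(t)}$ are the first real zeros of $f_t^X$. Combining the uniform bound $f_t^X = F + \mathcal{O}(t)$ with the fact that $F$ strictly changes sign at its simple zeros $u_\pm$, an intermediate-value argument gives $u_\pm^{(t)}\to u_\pm$ as $t\downarrow 0$. Thus any $u>u_+$ (resp.\ $u<u_-$) lies outside $\Dd_t^X$ for all sufficiently small $t$, so $\Lambda_X(u,t)=+\infty$, completing the identification of $\Dd_X$ and of $\Lambda_X$ outside it.
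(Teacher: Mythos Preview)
Your approach is correct and follows the same overall strategy as the paper---analyse the explicit formulae in~\eqref{eq:cgfMg} and track the small-$t$ behaviour of $f_t^\MM$---but you carry out the $X$-case in more detail than the paper does. The paper simply cites~\cite{FJ09} for the fact that the domain boundaries $u_\pm(t)$ of $\mathcal{D}_t^X$ converge \emph{monotonically} (from above, resp.\ from below) to $u_\pm$, whereas you compute the limit $F$ of $f_t^X$ explicitly, identify its first zeros as $u_\pm$ via the amplitude--phase form, and use an intermediate-value argument to obtain $u_\pm^{(t)}\to u_\pm$. Your route is more self-contained; the paper's citation is shorter.

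One small point to be aware of: the paper defines $\mathcal{D}_\MM := \bigcap_{t>0}\mathcal{D}_t^\MM$, so the assertion $\mathcal{D}_X=(u_-,u_+)$ requires $(u_-,u_+)\subset\mathcal{D}_t^X$ for \emph{every} $t>0$, not just small $t$. Your convergence argument only controls small $t$; the monotone convergence from~\cite{FJ09} (i.e.\ $u_+(t)>u_+$ and $u_-(t)<u_-$ for all $t$) is what closes this. For the limiting cgf itself---which is what the rest of the proof actually uses---your argument is complete.
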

\begin{proof}
The lemma follows from a simple yet careful analysis of the functions~$\Lambda_X(\cdot, t)$ 
and~$\Lambda_V(\cdot, t)$ together with their effective domains.
Clearly here $\mathcal{D}^V_{t} = (-\infty, u^V(t))$, where
$u^V(t)\equiv 2bt/[\xi^2(\E^{bt}-1)]$ converges from above to $2/\xi^2$ as $t$ tends to zero.
In~\cite{FJ09}, the authors showed that $u_+(t)$ (resp. $u_-(t))$ converges from above 
(resp. from below) to $u_+$ (resp. $u_-$) as $t$ tends to zero, so that the limiting domain $\cap_{t>0}\mathcal{D}^X_t$ is equal 
to $(u_-,u_+)$.
The pointwise limits are then straightforward to prove.
\end{proof}

Define now the following functions on $\DdM^{o}$:
\begin{equation}\label{eq:fgasymp}
\left\{
\begin{array}{rl}
\ff_0^{X}(u) & := \displaystyle \cos(\bar{\rho}\xi u/2) - \frac{\rho}{\bar{\rho}} \sin(\bar{\rho}\xi u/2),
\quad
\ff_0^{V}(u):=1-\frac{u\xi^2}{2},
\qquad
\ff_1^{V}(u):=-\frac{b u \xi^2}{4},
\\
\ff_1^{X}(u) & := \displaystyle \left\{
\begin{array}{ll}
\displaystyle \frac{\rho(\xi+2b\rho)}{4 \bar{\rho}^2}\cos(\bar{\rho}\xi u/2) 
+ \left(\frac{\xi+2b\rho}{4\bar{\rho}}-\frac{\xi\rho+2b}{2u\xi\bar{\rho}^3}\right) \sin(\bar{\rho}\xi u/2), & \text{if }u\neq0,\\
-b/2, & \text{if }u=0,
\end{array}
\right. , \\
\g_0^{X}(u) & := \rho\xi u,
\qquad 
\g_0^{V}(u)\equiv 0.
\end{array}
\right.
\end{equation}

\begin{lemma}\label{lemma:fg}
For $\MM\in\{X,V\}$, the expansions 
$f_t^{\MM}(u) = \ff_0^{\MM}(u) + \ff_1^{\MM}(u)t+\mathcal{O}(t^2)$
and 
$g_t^{\MM}(u) = \g_0^{\MM}(u) + \mathcal{O}(t)$
hold for all $u\in\DdM^{o}$ as $t$ tends to zero.
They further hold uniformly on compacts.
\end{lemma}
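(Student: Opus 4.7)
For $\MM = V$, the lemma is essentially immediate: $g_t^V \equiv 0 \equiv \g_0^V$, and a Taylor expansion $1-\E^{bt} = -bt - (bt)^2/2 + \mathcal{O}(t^3)$ applied to the explicit formula $f_t^V(u) = 1 + u\xi^2(1-\E^{bt})/(2bt)$ directly produces the announced $\ff_0^V$ and $\ff_1^V$. Likewise, the expansion $g_t^X(u) = \rho\xi u + bt$ around $\g_0^X(u) = \rho\xi u$ is trivial, with remainder exactly $bt$. Only the expansion of $f_t^X$ therefore requires work.

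The key observation I would exploit for the latter is that, although $f_t^X$ is written in~\eqref{eq:cgfMg} in terms of the square root $d(u/t)$, it is in fact an \emph{even} function of $z := d(u/t)t$: both $\cosh(z/2)$ and $\sinh(z/2)/z$ are even entire functions of $z$, and hence can be written as entire functions $\chi, \psi$ of $z^2$. This yields
\[
f_t^X(u) = \chi\bigl((d(u/t)t)^2\bigr) - g_t^X(u)\,\psi\bigl((d(u/t)t)^2\bigr),
\]
and a direct computation shows that $(d(u/t)t)^2 = -\bar\rho^2\xi^2 u^2 + u\xi(\xi + 2b\rho)t + b^2 t^2$ is polynomial in $(u,t)$. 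Consequently $f_t^X$ is jointly smooth in $(u,t)$ on $\DdM^o \times \RR$.

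To identify the expansion coefficients I would use the identities $\chi(-y^2) = \cos(y/2)$ and $\psi(-y^2) = \sin(y/2)/y$ (valid for $y \in \RR$). Evaluating at $t=0$ immediately gives $\ff_0^X(u) = \cos(\bar\rho\xi u/2) - (\rho/\bar\rho)\sin(\bar\rho\xi u/2)$, and differentiating $f_t^X$ once in $t$ via the chain rule and then simplifying using the identity $\rho(\xi + 2b\rho) + 2b\bar\rho^2 = \rho\xi + 2b$ (a consequence of $\rho^2 + \bar\rho^2 = 1$) recovers the expression~\eqref{eq:fgasymp} for $\ff_1^X$. The main conceptual obstacle I anticipate is that $(d(u/t)t)^2$ changes sign as $u$ crosses $0$ (from strictly negative for $u \neq 0$ and $t$ small, to $b^2 t^2 > 0$ at $u = 0$), which would force an awkward case split if one worked with $d(u/t)t$ directly; the evenness trick above bypasses this entirely, incidentally explaining why the piecewise formula for $\ff_1^X$ in~\eqref{eq:fgasymp} defines a genuinely smooth function at $u = 0$ (as can be verified by taking $u \to 0$ in the non-trivial branch and using $\rho^2+\bar\rho^2=1$). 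Finally, the uniformity on compacts $K \subset \DdM^o$ is automatic from joint smoothness: the Taylor remainder is bounded by $t^2 \sup_{K \times [0,\eta]} |\partial_t^2 f_t^X|/2$ for some $\eta > 0$, and the analogous argument handles $g_t^X$ as well as the $V$-case.
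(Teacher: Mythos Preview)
Your proof is correct and takes a genuinely different route from the paper. The paper expands $d(u/t)$ itself as $t\downarrow 0$: for $u\neq 0$ it writes $d(u/t)=\I u d_0/t + d_1 + \mathcal{O}(t)$ with $d_0=\bar\rho\xi\,\sgn(u)$ purely real and $d_1$ purely imaginary, then substitutes into the $\cosh$, $\sinh$ and $g_t^X/(td(u/t))$ factors separately, and afterwards treats the case $u=0$ by hand; uniform convergence on compacts for $f_t^X$ is only asserted as ``tedious but straightforward''. Your argument instead exploits that $f_t^X$ depends on $d(u/t)t$ only through its square, which you compute explicitly as the polynomial $-\bar\rho^2\xi^2 u^2 + u\xi(\xi+2b\rho)t + b^2 t^2$ in $(u,t)$. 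This makes $f_t^X$ manifestly jointly $C^\infty$ in $(u,t)$ on all of $\RR^2$, so the expansion with remainder $\mathcal{O}(t^2)$ and its uniformity on compacts follow immediately from Taylor's theorem, with no case split at $u=0$ and no need to track complex square roots or $\sgn(u)$. Your approach is shorter and conceptually cleaner, and as you note it also explains why the two branches of $\ff_1^X$ in~\eqref{eq:fgasymp} glue smoothly at the origin; the paper's expansion, by contrast, is more hands-on and would generalise more readily if one needed the individual asymptotics of $d(u/t)$, $\cosh(d(u/t)t/2)$ or $\sinh(d(u/t)t/2)$ for other purposes.
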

\begin{proof}
Let $\MM=X$, and define the quantities
$d_0:= \bar{\rho}\xi\sgn(u)$,
$d_1:=\frac{ \I\left(2 \kappa  \rho -\xi \right)\sgn(u)}{2 \bar{\rho}}$,
where $\sgn(u)=1$ if $u\geq 0$, and~$-1$ otherwise;
then for any $u\in\mathcal{D}_{X}^{o}\backslash\{0\}$,
$d(u/t) = \I u d_0 / t + d_1 + \mathcal{O}(t)$,
as $t$ tends to zero, and hence 
\begin{equation}\label{eq:fXasymp}
\begin{array}{rl}
\displaystyle \frac{g_t^{X}(u)}{td(u/t)} &\displaystyle 
 =\frac{\rho\xi u}{\I u d_0}+\frac{d_1\xi\rho-\I b d_0}{d_0^2 u}t+\mathcal{O}(t^2), \\
\displaystyle \cosh\left(\frac{d(u/t)t}{2}\right) & \displaystyle 
 =\cos\left(\frac{d_0 u}{2}\right)+\frac{\I d_1}{2}\sin\left(\frac{d_0 u}{2}\right)t +\mathcal{O}(t^2), \\
\displaystyle \sinh\left(\frac{d(u/t)t}{2}\right) &\displaystyle 
 =\I \sin\left(\frac{d_0 u}{2}\right)+\frac{d_1}{2}\cos\left(\frac{d_0 u}{2}\right)t +\mathcal{O}(t^2).
\end{array}
\end{equation}
The expansion for $f_t^{X}$ in~\eqref{eq:cgfMg} for $u\in\mathcal{D}_{X}^{o}\backslash\{0\}$ 
follows after using the asymptotics in~\eqref{eq:fXasymp} and some simplification.
When $u=0$, straightforward computations reveal that $f_t^{X}(u)=1-bt/2+\mathcal{O}(t^2)$, 
in agreement with~\eqref{eq:cgfMg}.
Note that $f_1^X$ is continuous at the origin.
The expansions for $f_t^{V}$ and $g_t^{\MM}$ follow analogous arguments and the lemma follows. 
Uniform convergence of the sequences $(g_t^\MM)_t$ and $(f_t^V)_t$ is trivial.
Uniform convergence on compacts of the sequence $(f^X_t)_t$ holds as soon as 
$\sup_{u\in\Dd^o_X}\left|f_t^{\MM}(u)-f_0^{\MM}(u)-f_1^{\MM}(u)t\right|$ converges to zero when~$t$ tends to zero,
which is tedious but straightforward to prove.
\end{proof}

Consider now the (time-dependent) saddlepoint equation: 
\begin{equation}\label{eq:saddlepoint}
\partial_{u}\Lambda_{\MM} (u,t)=x,\qquad 
x\in\mathcal{K}_{\MM}, t>0.
\end{equation}
The following lemma proves existence and uniqueness of the solution to this equation, 
as well as a small-time expansion.
Let us first define the following functions on $\mathcal{K}_{\MM}$:
\begin{equation}\label{eq:calpha}
\alpha_0^X(x) := \displaystyle u_- \ind_{\{x<0\}} + u_+\ind_{\{x> 0\}},
\qquad
\qquad
\alpha_0^V(x):=\frac{2}{\xi^2} \ind_{\{x\geq 0\}},
\end{equation}

\begin{lemma}\label{lemma:u^*smalltime}
For any $x\in\mathcal{K}_\MM$, $t>0$, Equation~\eqref{eq:saddlepoint} admits a unique solution~$u^*_{\MM}(x,t) \in \Dd_t^\MM$; moreover, the expansion $u^*_\MM(x,t) = \alpha_0^\MM(x) + \mathcal{O}(t)$ holds.
For $x=0$, Equation~\eqref{eq:saddlepoint} also admits a unique solution~$u^*_{\MM}(0,t)$, 
which converges to zero as $t$ approaches zero.
\end{lemma}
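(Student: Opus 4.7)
The plan is to prove the two assertions separately: existence and uniqueness for fixed $t>0$, and then the $\mathcal{O}(t)$ expansion.

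\textbf{Existence and uniqueness.} For each $t>0$, $\Lambda_\MM(\cdot,t)$ is real-analytic and strictly convex on the interior of $\Dd_t^\MM$ (since $\MM_t$ is non-degenerate), and from the closed-form expressions in~\eqref{eq:cgfMg} it blows up as~$u$ approaches any finite endpoint of its effective domain. Hence $\partial_u\Lambda_\MM(\cdot,t)$ is a strictly increasing homeomorphism from $\Dd_t^\MM$ onto the essential range of values of $\MM_t$: namely $\RR$ when $\MM=X$, and $(0,\infty)$ when $\MM=V$ (the latter using $V_t>0$ a.s.\ under the Feller condition $\mu>1$). This immediately yields a unique solution $u^*_\MM(x,t)$ for every $x\in\mathcal{K}_\MM$, and also for $x=0$ in the $X$ case.

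\textbf{Small-time expansion.} By Lemma~\ref{lemma:LimitCGF}, $\Lambda_\MM\equiv 0$ on $\DdM^o$, so for $x\neq 0$ the saddle point cannot stay in the interior of $\DdM$ as $t\downarrow 0$; it must approach the boundary, where $f_t^\MM$ vanishes to leading order. A direct inspection of~\eqref{eq:fgasymp} and~\eqref{eq:calpha} shows that $\alpha_0^\MM(x)$ is precisely the zero of $\ff_0^\MM$ lying on the appropriate side of the origin ($u_\pm$ according to $\sgn(x)$ in the $X$ case, and $2/\xi^2$ in the $V$ case). I therefore substitute the ansatz $u=\alpha+tw$ with $\alpha:=\alpha_0^\MM(x)$. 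Using Lemma~\ref{lemma:fg}, Taylor expansion around~$\alpha$, and the identity $\ff_0^\MM(\alpha)=0$, one obtains as $t\downarrow 0$:
\begin{align*}
f_t^\MM(\alpha+tw) &= t\bigl[w\,(\ff_0^\MM)'(\alpha) + \ff_1^\MM(\alpha)\bigr] + \mathcal{O}(t^2),
\\
(f_t^\MM)'(\alpha+tw) &= (\ff_0^\MM)'(\alpha) + \mathcal{O}(t),
\\
(g_t^\MM)'(\alpha+tw) &= (\g_0^\MM)'(\alpha) + \mathcal{O}(t).
\end{align*}
Substituting into $\partial_u\Lambda_\MM(u,t)=-\frac{\mu t}{2}\bigl[(g_t^\MM)'(u)+2(f_t^\MM)'(u)/f_t^\MM(u)\bigr]=x$, the contribution of $(g_t^\MM)'$ is $\mathcal{O}(t)$ and drops out at leading order; what remains is the regular equation
\begin{equation*}
\frac{-\mu\,(\ff_0^\MM)'(\alpha)}{w\,(\ff_0^\MM)'(\alpha)+\ff_1^\MM(\alpha)}=x+\mathcal{O}(t),
\end{equation*}
to which the implicit function theorem applies, producing a unique local root $w(x,t)=w^*(x)+\mathcal{O}(t)$ with $w^*(x)=-\mu/x-\ff_1^\MM(\alpha)/(\ff_0^\MM)'(\alpha)$. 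Combined with the global uniqueness obtained in the first step, this delivers $u^*_\MM(x,t)=\alpha_0^\MM(x)+\mathcal{O}(t)$.

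\textbf{The case $x=0$ and the main obstacle.} For $\MM=X$ and $x=0$, one has $\partial_u\Lambda_X(0,t)=\EE(X_t)\to 0$ as $t\downarrow 0$ (since $X_0=0$); since $\partial_u\Lambda_X(\cdot,t)$ is strictly increasing, continuity in~$t$ forces the unique root $u^*_X(0,t)$ to converge to zero. The genuine obstacle throughout is the degeneracy of the limit problem: the naive limit equation $\partial_u\Lambda_\MM(u,0)=x$ is identically zero on $\DdM^o$, so the implicit function theorem cannot be applied to the limit itself in the variable~$u$. The resolution is the rescaling $u=\alpha+tw$, coupled with the sharp two-term expansion in Lemma~\ref{lemma:fg}, which transforms the problem into a well-posed equation in the rescaled variable~$w$ where a non-degenerate limit emerges.
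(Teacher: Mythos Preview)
Your argument for $x\in\mathcal{K}_\MM$ is correct and takes a genuinely different route from the paper. The paper first establishes the bare convergence $u^*_X(x,t)\to u_\pm$ through a separate topological argument (its ``Condition~A'' dichotomy together with claims (i)--(iii) about $\partial_u\Lambda_X$), and only then substitutes into the explicit saddlepoint equation and solves order by order. You bypass this: you plug in the ansatz $u=\alpha_0^\MM(x)+tw$ directly, use the two-term expansion of Lemma~\ref{lemma:fg} and $\ff_0^\MM(\alpha_0^\MM(x))=0$ to reduce to a regular equation in~$w$, and then invoke the global uniqueness from the first step to identify the implicit-function root with $u^*_\MM(x,t)$. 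This is cleaner and treats $X$ and $V$ uniformly; the price is that the implicit function theorem requires joint regularity of the rescaled map in $(w,t)$ near $(w^*,0)$, which is available from the explicit formulas but which you leave implicit.

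For $x=0$, however, your argument has a genuine gap. From $\partial_u\Lambda_X(0,t)=\EE(X_t)\to 0$ and the strict monotonicity of $\partial_u\Lambda_X(\cdot,t)$ alone, it does \emph{not} follow that the root $u^*_X(0,t)$ converges to zero: take for instance $h_t(u):=t(u-1)$, which is strictly increasing with $h_t(0)\to 0$, yet its unique root is identically~$1$. What is missing is some control on the sign of $\partial_u\Lambda_X(\pm\delta,t)$ for small fixed~$\delta$ (equivalently, some uniformity in the convergence of $\partial_u\Lambda_X(\cdot,t)$ near the origin), or else a direct computation from the closed form. The paper does not spell out this step either---it lists $\lim_{t\downarrow 0}u^*_X(0,t)=0$ as one of three claims ``which can be proved using the convexity of the moment generating function and tedious computations''---so you are not expected to produce a full proof, but the justification you give is not valid as stated.
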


\begin{proof}
We first prove existence and uniqueness of the solution of the saddlepoint equation~\eqref{eq:saddlepoint}.
Consider first the case $\MM=V$. 
Clearly, for any $t>0$, the map $\partial_u \Lambda_{V}(\cdot,t):\mathcal{D}_{t}^{V}\to\mathbb{R}$ is strictly increasing
and the image of 
$\mathcal{D}^V_{t}$ by $\partial_u \Lambda_{V}(\cdot,t)$ is $\mathbb{R}^{*}_{+}$.
Thus, for any $x>0$,~\eqref{eq:saddlepoint} admits a unique solution
$u_{V}^*(x,t)=
\frac{2 t }{ \xi ^2 }\left(\frac{b}{\E^{bt}-1}-\frac{a}{x}\right)
$, 
which converges to $\alpha_0^V(x)$.
Consider now  the case $\MM=X$. 
We first start with the following claims, which can be proved using the convexity of the moment generating function and tedious computations.
\begin{enumerate}[(i)]
\item For any $t>0$, the function $\partial_u \Lambda_{X}(\cdot,t):\mathcal{D}_{t}^{X}\to\mathbb{R}$ is strictly increasing
and maps 
$\mathcal{D}^X_{t}$ to $\mathbb{R}$;
\item For any $t>0$, $u_{X}^*(0,t)>0$  and $\lim_{t\downarrow 0}u_{X}^*(0,t)=0$,
i.e. the unique minimum of $\Lambda_{X}(\cdot,t)$ converges to zero;
\item For each $u\in\mathcal{D}_X^{o}$, $\partial_u \Lambda_{X}(u,t)$ converges to zero 
as $t$ tends to zero.
\end{enumerate}
Now, choose $x>0$ (analogous arguments hold for $x<0$).
It is clear from~(i) that~\eqref{eq:saddlepoint} admits a unique solution.
Note further that (i) and (ii) imply $u_{X}^*(x,t)>0$.
Next we introduce the following condition.
\begin{equation*}
\textbf{Condition A}: \text{There exists $t_1>0$ such that $u^*_{X}(x,t)\in\mathcal{D}_X^{o}$ for all $t<t_1$}.
\end{equation*}
Suppose condition A is not true and further assume that the sequence $(u_{X}^*(x,t))_{t>0}$ does not converge to $u_{+}^{X}$ as~$t$ tends to zero.
Then there exists $t^*_1>0$ and $\eps>0$ such that for all $t<t^*_1$ we have $u_{X}^*(x,t)\not\in B(u_{+}^{X},\eps):=\{y\in\mathbb{R}:|y-u_{+}^{X}|<\eps\}$.
But since $\lim_{t\downarrow0}\mathcal{D}_t^{X}=\mathcal{D}_{X}$, this implies that our sequence must then satisfy condition A, which is a contradiction. 
Therefore $u_{X}^*(x,t)$ converges to $u_{+}^{X}$.
Next suppose that condition A is true.
Again note that (i) and (ii) imply $u_{X}^*(x,t)>0$.
From~(iii) there exists $t_2>0$ such that the sequence $(u_{X}^*(x,t))_{t>0}$ 
is strictly increasing as $t$ goes to zero for $t<t_2$.
Now let $t^*=\min(t_1,t_2)$ and consider $t<t^*$.
Then $u_{X}^*(x,t)$ is bounded above by $u_+$ (because $u_{X}^*(x,t)\in\mathcal{D}_X^{o}$) 
and therefore converges to a limit $L\in[0,u_+]$. 
Suppose that  $L\ne u_+$. 
Since $s\mapsto u_X^*(x,s)$ is strictly increasing as $s$ tends to zero (and $s<t^*$), 
and $\partial_u \Lambda_{X}(\cdot,t)$ is strictly increasing we have $\partial_u \Lambda_{X}(u_X^*(x,t),t)\leq \partial_u \Lambda_{X}(L,t)$;
Combining this and~(iii) yields
$
\lim_{t\downarrow 0}\partial_u \Lambda_{X}(u_X^*(x,t),t) \leq  
\lim_{t\downarrow 0}\partial_u \Lambda_{X}(L,t) = 0
\neq x,
$
which contradicts the assumption $x>0$. 
Therefore $L=u_+$ and the first part of the lemma follows.

Given existence and uniqueness of the solution to the saddlepoint equation, 
we now prove the expansion stated in the lemma.
In light of~\eqref{eq:cgfMg}, the saddlepoint equation~\eqref{eq:saddlepoint} can be written explicitly as
$$
-\frac{\mu t}{2}\left[
\partial_u g_t^{\MM}(u_\MM^*(x,t))  f_t^{\MM}(u_\MM^*(x,t)) + 2 \partial_u f_t^{\MM}(u_\MM^*(x,t))
\right] = f_t^{\MM}(u_\MM^*(x,t)) x .
$$
Using Lemma~\ref{lemma:fg} in this equation and solving at each order yields the desired expansion.
\end{proof}

For $\MM\in\{X,V\}$ and $t>0$, introduce now a time-dependent change of measure by
\begin{equation}\label{eq:measureChange}
\frac{\D\mathbb{Q}^\MM_{x,t}}{\D\PP} := 
\exp\left(\frac{u^*_{\MM}(x,t)\MM_{t}}{t} - \frac{\Lambda_{\MM}(u^*_{\MM}(x,t),t)}{t}\right).
\end{equation}
By Lemma~\ref{lemma:u^*smalltime}, $u^*_\MM(x,t)$ belongs to the interior of $\mathcal{D}_t^{\MM}$,
and so $|\Lambda_{\MM}(u^*_\MM(x,t))|$ is finite.
Also $\D\mathbb{Q}^\MM_{x,t}/\D\PP$ is almost surely strictly positive and $\EE[\D\mathbb{Q}^\MM_{x,t}/\D\PP]=1$. 
Therefore~\eqref{eq:measureChange} is a valid measure change for all $t>0$ and $x\in\mathcal{K}_{\MM}$.

\noindent
Define now the random variable $\label{eq:DefZ} \ZZ:=(\MM_{t}- x)$, 
and denote its  characteristic function in the $\mathbb{Q}^\MM_{x,t}$-measure~\eqref{eq:measureChange}
by $\Phi^\MM_{x,t}(u):=\EE^{\mathbb{Q}^\MM_{x,t}}(\E^{\I u \ZZ})$.
We apply Fourier inversion methods to derive the tail probabilities of $\ZZ$ under the measure~\eqref{eq:measureChange}. 
It follows from Lemmas \ref{lem:invfourtranszero} and \ref{lem:L1lemZero} that, for any $x\in\mathcal{K}_{\MM}$, the following estimate holds as $t$ tends to zero:
\begin{equation} \label{e:fourierasympsmalltime}
\EE^{\QQ}\left[\exp\left(-\frac{u^*_{\MM}(x,t)\ZZ}{t}\right)\ind_{\{\ZZ\geq 0\}}\right]\ind_{\{x>0\}}
=\mathcal{O}(t).
\ee

Finally, we have the following additional estimate.
\begin{lemma} \label{Lemma:U*Asymptotics}
For any $x\in\mathcal{K}_{\MM}$, there exists a constant $C>0$ such that the expansion
$$
\exp\left[-\frac{xu^*_{\MM}(x,t)}{t}+\frac{\Lambda_{\MM}(u^*_\MM(x,t),t)}{t}\right]
=
C \exp\left(-\frac{\Lambda_\MM^*(x)}{t}\right)
\mathcal{O}\left(t^{-\mu}\right)
$$
holds as $t$ tends to zero, with $\Lambda_\MM^*$ in~\eqref{eq:L*X},\eqref{eq:L*V}.
\end{lemma}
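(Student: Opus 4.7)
The plan is to start from the explicit formula~\eqref{eq:cgfMg} for $\Lambda_\MM(u,t)$ and plug in $u = u^*_\MM(x,t)$, then combine all the small-time expansions already at our disposal to identify both the exponent $-\Lambda_\MM^*(x)/t$ and the algebraic prefactor of order $t^{-\mu}$.

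First, write $\Lambda_\MM(u^*_\MM(x,t),t)/t = -\frac{\mu}{2}\bigl[g_t^\MM(u^*_\MM(x,t)) + 2\log f_t^\MM(u^*_\MM(x,t))\bigr]$ using~\eqref{eq:cgfMg}. By Lemma~\ref{lemma:u^*smalltime}, $u^*_\MM(x,t) = \alpha_0^\MM(x) + \alpha_1^\MM(x) t + \mathcal{O}(t^2)$ for some $\alpha_1^\MM(x)$ obtained by solving~\eqref{eq:saddlepoint} order by order with Lemma~\ref{lemma:fg}. The $g_t^\MM$ term is harmless: a direct substitution gives $g_t^\MM(u^*_\MM(x,t)) = \g_0^\MM(\alpha_0^\MM(x)) + \mathcal{O}(t)$, contributing an $\mathcal{O}(1)$ constant to the exponent.

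The crux is the $\log f_t^\MM(u^*_\MM(x,t))$ term. The key observation is that $\alpha_0^\MM(x)$ is precisely a root of $\ff_0^\MM$ whenever $x\in\mathcal{K}_\MM$: for $\MM = V$, $\ff_0^V(2/\xi^2) = 0$ by inspection; for $\MM = X$, a short computation with the definitions of $u_\pm$ in~\eqref{eq:DefUpm} gives $\cos(\bar\rho\xi u_\pm/2) = \rho$ and $\sin(\bar\rho\xi u_\pm/2) = \bar\rho$ (up to sign), which forces $\ff_0^X(u_\pm) = 0$. Consequently, the Lemma~\ref{lemma:fg} expansion reduces to
\[
f_t^\MM(u^*_\MM(x,t)) = \bigl[(\ff_0^\MM)'(\alpha_0^\MM(x))\,\alpha_1^\MM(x) + \ff_1^\MM(\alpha_0^\MM(x))\bigr] t + \mathcal{O}(t^2) =: \tilde c(x)\,t + \mathcal{O}(t^2),
\]
so $\log f_t^\MM(u^*_\MM(x,t)) = \log t + \log \tilde c(x) + \mathcal{O}(t)$. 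This is the mechanism that generates the $t^{-\mu}$ prefactor, since
\[
\exp\!\Bigl(\frac{\Lambda_\MM(u^*_\MM(x,t),t)}{t}\Bigr) = \tilde c(x)^{-\mu}\exp\!\Bigl(-\tfrac{\mu}{2}\g_0^\MM(\alpha_0^\MM(x))\Bigr)\,t^{-\mu}\bigl(1+\mathcal{O}(t)\bigr).
\]

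Finally, writing $-x u^*_\MM(x,t)/t = -x\alpha_0^\MM(x)/t - x\alpha_1^\MM(x) + \mathcal{O}(t)$ and recognizing that $x\alpha_0^\MM(x) = \Lambda_\MM^*(x)$ (which is immediate from~\eqref{eq:L*X}--\eqref{eq:L*V} and~\eqref{eq:calpha}), one combines the two pieces to obtain
\[
\exp\!\Bigl[-\tfrac{xu^*_\MM(x,t)}{t}+\tfrac{\Lambda_\MM(u^*_\MM(x,t),t)}{t}\Bigr] = C(x)\,t^{-\mu}\exp\!\Bigl(-\tfrac{\Lambda_\MM^*(x)}{t}\Bigr)\bigl(1+\mathcal{O}(t)\bigr),
\]
with $C(x) := \tilde c(x)^{-\mu}\exp\bigl(-x\alpha_1^\MM(x)-\tfrac{\mu}{2}\g_0^\MM(\alpha_0^\MM(x))\bigr) > 0$, which yields the announced expansion. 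The main obstacle is bookkeeping: one must (i) verify carefully that $\alpha_0^\MM(x)$ sits exactly at the zero of $\ff_0^\MM$ (this is where the specific form of $u_\pm$ in~\eqref{eq:DefUpm} enters) and (ii) keep the first subleading coefficient $\alpha_1^\MM(x)$ of $u^*_\MM(x,t)$ explicit enough to guarantee $\tilde c(x)\neq 0$, so that $\log f_t^\MM(u^*_\MM(x,t))$ indeed behaves like $\log t$ rather than hiding an extra logarithmic or negative correction.
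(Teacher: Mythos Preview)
Your proposal is correct and follows the same route as the paper: split the exponent into the $-xu^*_\MM(x,t)/t$ part (handled via Lemma~\ref{lemma:u^*smalltime} and the identity $x\alpha_0^\MM(x)=\Lambda_\MM^*(x)$) and the $\Lambda_\MM(u^*_\MM(x,t),t)/t$ part (handled via~\eqref{eq:cgfMg} and Lemma~\ref{lemma:fg}). The paper's proof is only two sentences and leaves the mechanism behind $\E^{\Lambda_\MM(u^*_\MM(x,t),t)/t}=\mathcal{O}(t^{-\mu})$ implicit; you have made this explicit by observing that $\ff_0^\MM(\alpha_0^\MM(x))=0$, which is exactly the hidden point. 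Your constant $\tilde c(x)$ agrees with the paper's form in the Remark after the lemma once one uses the leading-order saddlepoint relation $-\mu\,\partial_u\ff_0^\MM(\alpha_0^\MM(x))=x\,\tilde c(x)$, so the two expressions for $C$ coincide.
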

\begin{remark}
The constant $C$ above can be computed explicitly as 
$$
C = \left(-\frac{\mu\partial_u f_0^\MM(\alpha_0^\MM(x))}{x}\right)^{-\mu}
\exp\left(-x\alpha_1^\MM(x)-\frac{\mu}{2}g_0^{\MM}(\alpha_0^\MM(x))\right).
$$
\end{remark}
\begin{proof}
From Lemma~\ref{lemma:u^*smalltime} and the characterisation of $\Lambda^*$ 
in~\eqref{eq:L*X},\eqref{eq:L*V}, 
there exists $C>0$, such that for small~$t$,
\begin{equation}\label{eq:asymp1}
\exp\left(-\frac{xu^*_{\MM}(x,t)}{t}\right)
 = C\exp\left(-\frac{x\alpha_0^\MM(x)}{t}\right)(1+\mathcal{O}(t))
 = C\exp\left(-\frac{\Lambda_\MM^*(x)}{t}\right)(1+\mathcal{O}(t)).
\end{equation}
The definition of $\Lambda_{\MM}$ in~\eqref{eq:cgfMg} and Lemma~\ref{lemma:u^*smalltime}
yield $\E^{\Lambda_{\MM}(u^*_\MM(x,t),t)/t} = \mathcal{O}\left(t^{-\mu}\right)$,
and the lemma follows.
\end{proof}

We now put all the pieces together.
Using the time-dependent change of measure~\eqref{eq:measureChange}, we have for $x>0$,
\begin{align*}
\PP(\MM_{t}\geq x)
 = \EE\left[\ind_{\{\MM_{t}\geq x\}}\right] & =
\exp\left(\frac{\Lambda_{\MM}(u^*_{\MM}(x,t)}{t}\right)
\EE^{\mathbb{Q}^\MM_{x,t}}\left[\exp\left(-\frac{u^*_{\MM}(x,t)\MM_{t}}{t}\right)
\ind_{\{\MM_{t}\geq x\}}\right] \\
 & = \exp\left(-\frac{xu^*_{\MM}(x,t) - \Lambda_{\MM}(u^*_{\MM}(x,t),t)}{t}\right)
\EE^{\QQ}\left[\exp\left(-\frac{u^*_{\MM}(x,t)\ZZ}{t}\right)\ind_{\{\ZZ\geq 0\}}\right],
\end{align*}
with $\ZZ$ defined on page~\pageref{eq:DefZ}. 
An analogous argument holds for probabilities $\PP(\MM_{t}\leq x)$ when $x<0$, 
and Theorem~\ref{thm:LDP} the follows from Lemma~\ref{Lemma:U*Asymptotics} and Equation\eqref{e:fourierasympsmalltime}.

\appendix
\section{The G\"artner Ellis Theorem}\label{app:LDP}
We provide here a brief review of large deviations and the G\"artner-Ellis theorem.
For a detailed account of these, the interested reader should consult~\cite{DZ93}.
Let $(X_n)_{n\in\NN}$ be a sequence of random variables in~$\RR$, with law~$\mu_n$
and cumulant generating function $\Lambda_n(u) \equiv \log\EE(\E^{u X_n})$.

\begin{definition}
The sequence {$X_n$} is said to satisfy a large deviations principle with speed $n$ 
and rate function~$I$ if for each Borel measurable set $E\subset\RR$, 
$$
-\inf_{x\in E^o} I(x)
 \leq \liminf_{n\uparrow \infty} \frac{1}{n}\log\PP\left(X_n \in E \right)
\leq \limsup_{n\uparrow \infty} \frac{1}{n}\log\PP\left(X_n \in E \right)
\leq -\inf_{x\in \bar{E}}I(x).
$$

\end{definition}
Before stating the main theorem, we need one more concept:
\begin{definition}\label{def:EssSmooth}
Let $\Lambda : \RR \rightarrow (-\infty, +\infty]$ be a convex function, 
and $\Dd_\Lambda := \{u\in\RR: \Lambda(u) < \infty\}$ its effective domain. 
It is said to be essentially smooth if
\begin{itemize}
\item The interior $\Dd^o_\Lambda$ is non-empty;
\item $\Lambda$ is differentiable throughout $\Dd^o_\Lambda$;
\item $\Lambda$ is steep: $\lim \limits_{n\uparrow \infty}|\Lambda' (u_n)|=\infty$ 
whenever $(u_n)$ is a sequence in $\Dd_\Lambda^o$ converging to a boundary point 
of~$\Dd_\Lambda^o$.
\end{itemize}
\end{definition}
Assume now that the limiting cumulant generating function
$\Lambda(u) := \lim_{n\uparrow \infty }n^{-1}\Lambda_n(nu)$,
exists as an extended real number for all $u\in\RR$, and let $\Dd_\Lambda$ denote its effective domain.
Let $\Lambda^*:\RR\to\RR_+$ denote its (dual) Fenchel-Legendre transform, via the variational formula
$\Lambda^*(x) \equiv \sup_{u \in D_\Lambda}\{u x-\Lambda(u)\}$.
Then the following holds:
\begin{theorem}[G\"artner-Ellis theorem]\label{thm:GE}
If the origin lies in the interior of~$\Dd_{\Lambda}$ and if $\Lambda$ is lower semicontinuous and essentially smooth,
then the sequence $(X_n)_n$ satisfies a large deviations principle with rate function~$\Lambda^*$.
\end{theorem}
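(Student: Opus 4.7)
The plan is to follow the classical two-step proof of the G\"artner--Ellis theorem: upper bound for closed sets via Chebyshev plus exponential tightness, then lower bound for open sets via an exponential change of measure.

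As preliminaries, I would note that $\Lambda$ is automatically convex as the pointwise limit of the convex functions $u\mapsto n^{-1}\Lambda_n(nu)$, so $\Lambda^*$ is convex and lower semicontinuous by standard Legendre duality. The assumption $0\in\Dd_\Lambda^o$ yields finite exponential moments on a neighborhood of zero, forcing $\Lambda^*(x)\to\infty$ as $|x|\to\infty$, so that $\Lambda^*$ is in fact a good rate function. For the upper bound, I would apply Chebyshev's exponential inequality $\PP(X_n\geq x)\leq \E^{\Lambda_n(nu)-nux}$ for $u>0$ and its symmetric version for $u<0$, take $n^{-1}\log$ and let $n\to\infty$ to obtain the half-line estimate $\limsup_n n^{-1}\log\PP(X_n\geq x)\leq -\sup_{u>0}(ux-\Lambda(u))$, and similarly for the left half-line. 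This extends to compact sets by a finite covering argument, and from compact to closed sets via exponential tightness: picking $u_-<0<u_+$ in $\Dd_\Lambda$ and applying Chebyshev on both half-lines yields $\limsup_n n^{-1}\log\PP(|X_n|\geq nM)\to -\infty$ as $M\to\infty$.

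The lower bound is the heart of the matter and is where essential smoothness is indispensable. Given an open set $G$ and $x\in G$ with $\Lambda^*(x)<\infty$, differentiability and steepness on $\Dd_\Lambda^o$ together guarantee a unique $\eta\in\Dd_\Lambda^o$ with $\Lambda'(\eta)=x$ and $\Lambda^*(x)=\eta x-\Lambda(\eta)$. I would then carry out the exponential tilt
\[
\frac{\D\widetilde\PP_n}{\D\PP}:=\exp\bigl(n\eta X_n-\Lambda_n(n\eta)\bigr),
\]
whose rescaled log-moment generating function converges to $\Lambda(\cdot+\eta)-\Lambda(\eta)$, which is convex with a minimum at $0$ and derivative $x$ there. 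A weak law of large numbers under $\widetilde\PP_n$ then forces $X_n$ to concentrate near $x$, and restricting the identity
\[
\PP(X_n\in G)=\EE^{\widetilde\PP_n}\bigl[\E^{-n\eta X_n+\Lambda_n(n\eta)}\ind_{\{X_n\in G\}}\bigr]
\]
to a small neighborhood of $x$ contained in $G$ extracts the factor $\E^{-n\Lambda^*(x)}$ up to subexponential corrections, giving the lower bound $\liminf_n n^{-1}\log\PP(X_n\in G)\geq-\Lambda^*(x)$. Taking the supremum over $x\in G$ and invoking the lower semicontinuity of $\Lambda^*$ closes the argument.

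The main obstacle is precisely this tilting step: steepness is what ensures that $\Lambda'$ surjects onto the interior of the effective domain of $\Lambda^*$, so that \emph{every} candidate minimizer of the rate function inside $G$ is actually reachable by an exponential change of measure. Without it, the would-be tilting parameter escapes to the boundary of $\Dd_\Lambda$ and no finite $\eta$ achieves $\Lambda'(\eta)=x$; the exponential example recalled in the introduction (where $\Lambda\equiv 0$ on $(-\infty,1)$) is exactly this pathology. This failure is not always fatal---an LDP may still hold with a non-strictly-convex rate function, as both that example and Theorem~\ref{thm:LDP} above illustrate---but it must then be derived by a more refined analysis than the one sketched here.
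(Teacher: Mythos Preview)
The paper does not prove this theorem at all: Appendix~\ref{app:LDP} is explicitly a ``brief review'' of large deviations, and Theorem~\ref{thm:GE} is simply quoted from~\cite{DZ93} with no proof given. There is therefore no ``paper's own proof'' to compare your proposal against.

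That said, your sketch is a faithful outline of the classical argument in~\cite[Theorem 2.3.6]{DZ93}, with two small slips worth flagging. First, in the exponential tightness step you wrote $\PP(|X_n|\geq nM)$; this should be $\PP(|X_n|\geq M)$, since $(X_n)$ is the sequence for which the LDP is sought, not a rescaled one. Second, the lower bound is a bit more delicate than you let on: essential smoothness guarantees that $\Lambda'$ maps $\Dd_\Lambda^o$ onto the set of \emph{exposed points} of $\Lambda^*$, not onto all of $\{\Lambda^*<\infty\}$. The tilting argument therefore first yields the lower bound only at exposed points, and one then uses a convexity/density argument (Rockafellar's theorem in~\cite{DZ93}) to pass to arbitrary $x\in G$. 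In one dimension with essential smoothness this is a mild technicality, but it is exactly the step that breaks down when steepness fails, so it is worth being precise about it---particularly given that the whole point of the present paper is to exhibit situations where that failure occurs yet an LDP still holds.
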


\section{Inverse Fourier Transform Representation}

Let $g(z):=\exp\left(-u^*_{\MM}(x,t)z/t\right)\ind_{\{z\geq 0\}}$. 
The main result of this appendix is the following representation:
\begin{lemma}\label{lem:invfourtranszero}
For every $x>0$, there exists $t^*_1>0$ such that for all $t<t^*_1$:
\begin{equation}\label{eq:ParsZero}
\EE^{\QQ}\left[g(Z^{\MM}_{x,t})\right]
 = \frac{1}{2\pi}\int_{\RR}\Phi^\MM_{x,t}(u)\left(\frac{u^*_{\MM}(x,t)}{t}+\I u\right)^{-1}\D u.
\end{equation}
\end{lemma}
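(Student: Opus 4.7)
The plan is to derive~\eqref{eq:ParsZero} as a direct application of the Parseval identity in $L^2(\RR)$, after making explicit the Fourier transform of $g$ and the density of $\ZZ$ under $\QQ$. Set $\alpha:=u^*_\MM(x,t)/t$. Since $x>0$, Lemma~\ref{lemma:u^*smalltime} gives $u^*_\MM(x,t)\to\alpha_0^\MM(x)>0$ as $t\downarrow 0$ (equal to $u_+$ when $\MM=X$ and to $2/\xi^2$ when $\MM=V$), so there exists $t_1^*>0$ such that $\alpha>0$ for all $t\in(0,t_1^*)$. For such $t$, one has $g\in L^1(\RR)\cap L^2(\RR)$ and a direct computation gives
\[
\hat g(u) \;=\; \int_\RR \E^{\I u z}g(z)\,\D z \;=\; \int_0^\infty \E^{(\I u-\alpha)z}\D z \;=\; \frac{1}{\alpha-\I u}.
\]

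Next I would identify the $\QQ$-density $f$ of $\ZZ=\MM_t-x$. From~\eqref{eq:measureChange},
\[
f(z) \;=\; \exp\!\left(\alpha(z+x)-\frac{\Lambda_\MM(u^*_\MM(x,t),t)}{t}\right) f^\PP_{\MM_t}(z+x),
\]
where $f^\PP_{\MM_t}$ is the $\PP$-density of $\MM_t$. For $\MM=V$, $f^\PP_{V_t}$ is a Gamma density with shape $\mu>1$ and rate $\lambda_t$; since $u^*_V(x,t)\in(\Dd_t^V)^o$ one has $\alpha<\lambda_t$, so $f$ is (a translate of) a Gamma density with rate $\lambda_t-\alpha>0$, hence $f\in L^1\cap L^2(\RR)$. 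For $\MM=X$, the analyticity of $\Lambda_X(\cdot,t)$ on the open interval $\Dd_t^X\ni u^*_X(x,t)$, combined with the explicit structure of $f_t^X$ and $g_t^X$ in~\eqref{eq:cgfMg}, ensures that $f$ is smooth with a rapidly decaying characteristic function, so again $f\in L^1\cap L^2(\RR)$.

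Parseval's identity then applies: since $f,g$ are real-valued and both in $L^2(\RR)$, and writing $\hat f(u)=\Phi^\MM_{x,t}(u)$ (so $\overline{\hat g(u)}=(\alpha+\I u)^{-1}$),
\[
\EE^\QQ[g(\ZZ)] \;=\; \int_\RR g(z)f(z)\,\D z \;=\; \frac{1}{2\pi}\int_\RR \hat f(u)\,\overline{\hat g(u)}\,\D u \;=\; \frac{1}{2\pi}\int_\RR \frac{\Phi^\MM_{x,t}(u)}{\alpha+\I u}\,\D u,
\]
which is exactly~\eqref{eq:ParsZero}. The main obstacle is ensuring the right-hand side converges as a Lebesgue integral (not merely as a principal value): since $|\hat g(u)|\sim|u|^{-1}$ at infinity, one needs $\Phi^\MM_{x,t}$ to decay faster than $|u|^{-1}$. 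For $V$ this is automatic, as $|\Phi^V_{x,t}(u)|$ decays polynomially of order $\mu>1$; for $X$ one must extract the decay of $\Phi^X_{x,t}$ from the explicit formula in~\eqref{eq:cgfMg}, which is the only step requiring care.
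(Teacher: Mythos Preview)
Your approach is correct and closely parallels the paper's, though the technical packaging differs. The paper writes $\EE^{\QQ}[g(\ZZ)]$ as a convolution $(q\ast p)(x)$ with $q(z)=g(-z)$ and $p$ the $\QQ$-density, uses $q,p\in L^1(\RR)$ to get $\mathcal{F}(q\ast p)=\mathcal{F}q\cdot\mathcal{F}p$, then invokes Lemma~\ref{lem:L1lemZero} (the $L^1$ bound on the product) to justify $L^1$ Fourier inversion. You instead use the $L^2$ Parseval identity, trading the $L^1$ check on $\Phi^\MM_{x,t}(u)(\alpha+\I u)^{-1}$ for the check that $f\in L^2(\RR)$. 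Both are legitimate; the paper's route dovetails with Lemma~\ref{lem:L1lemZero}, which is needed anyway for the $\mathcal{O}(t)$ estimate in~\eqref{e:fourierasympsmalltime}, while yours is a touch more direct for the bare identity.

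One point of confusion in your final paragraph: once you have $f,g\in L^2(\RR)$, Parseval already yields~\eqref{eq:ParsZero} with the right-hand side an honest Lebesgue integral, since $\hat f,\hat g\in L^2$ forces $\hat f\,\overline{\hat g}\in L^1$ by Cauchy--Schwarz. There is no residual ``obstacle'' about principal values. The real work you defer is \emph{verifying} $f\in L^2$ for $\MM=X$: your appeal to ``smooth with a rapidly decaying characteristic function'' is circular as written (smoothness alone does not give $L^2$). The clean way is to argue via Plancherel that $\Phi^X_{x,t}\in L^2(\RR)$, which amounts to showing $|\Phi^X_{x,t}(u)|$ decays like $|u|^{-\mu}$ with $\mu>1>\tfrac12$; this is exactly the computation the paper flags as ``analogous, yet tedious'' in the proof of Lemma~\ref{lem:L1lemZero}. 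So the hand-waved step in your proposal coincides with the one the paper also leaves implicit.
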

The proof of Lemma~\ref{lem:invfourtranszero} proceeds in two steps:
We first prove that the integrand in the right-hand side of Equality~\eqref{eq:ParsZero} belongs to~$L^1(\RR)$ 
(and hence the integral is well-defined), 
and then prove that this very equality holds. 
The first step is contained in the following lemma.
\begin{lemma}\label{lem:L1lemZero}
We have $\int_{\RR}\left|\frac{\Phi^{\MM}_{x,t}(u)}{u^*_{\MM}(x,t)/t + \I u}\right| \D u = \mathcal{O}(t)$ as $t$ tends to zero.
In particular, there exists $t^*_0>0$ such that
$\int_{\RR}\left|\frac{\Phi^{\MM}_{x,t}(u)}{u^*_{\MM}(x,t)/t + \I u}\right| \D u$ is finite
 for all $t<t^*_0$.
\end{lemma}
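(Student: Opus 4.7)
The plan is to first express $\Phi^\MM_{x,t}$ explicitly via the change of measure~\eqref{eq:measureChange} and the representation~\eqref{eq:cgfMg}. Exploiting that $g_t^\MM$ is real-affine in its argument (so that $g_t^\MM(u^*_\MM(x,t) + \I u t) - g_t^\MM(u^*_\MM(x,t))$ is purely imaginary), I obtain
\[
|\Phi^\MM_{x,t}(u)| = \left(\frac{f_t^\MM(u^*_\MM(x,t))}{|f_t^\MM(u^*_\MM(x,t) + \I u t)|}\right)^\mu.
\]
Substituting $v = u t$ in the integral of interest then produces
\[
\int_\RR \frac{|\Phi^\MM_{x,t}(u)|}{|u^*_\MM(x,t)/t + \I u|}\,\D u
= \bigl(f_t^\MM(u^*_\MM(x,t))\bigr)^\mu
\int_\RR \frac{\D v}{|f_t^\MM(u^*_\MM(x,t) + \I v)|^\mu\,|u^*_\MM(x,t) + \I v|}.
\]

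The key observation is then that $f_0^\MM(\alpha_0^\MM(x)) = 0$ for both $\MM \in \{X, V\}$: this is the defining property of $u_+$ in the $X$ case (see~\eqref{eq:DefUpm}), and a direct check $1 - (2/\xi^2)(\xi^2/2) = 0$ in the $V$ case. Combined with Lemmas~\ref{lemma:fg} and~\ref{lemma:u^*smalltime}, this yields $\bigl(f_t^\MM(u^*_\MM(x,t))\bigr)^\mu = \mathcal{O}(t^\mu)$; it then suffices to prove that the remaining $v$-integral is $\mathcal{O}(t^{1-\mu})$. Since $\mu > 1$, this is a bound that \emph{diverges} as $t \downarrow 0$, and its sharp derivation is the heart of the argument.

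The main obstacle is to control the integrand near $v = 0$, where $f_t^\MM(u^*_\MM(x,t) + \I v)$ is itself of order $t$. My plan is to Taylor-expand
\[
f_t^\MM(u^*_\MM(x,t) + \I v) = f_t^\MM(u^*_\MM(x,t)) + \I v\,(f_t^\MM)'(u^*_\MM(x,t)) + \mathcal{O}(v^2),
\]
and use that $(f_t^\MM)'(u^*_\MM(x,t))$ converges to $(f_0^\MM)'(\alpha_0^\MM(x))$, which is nonzero ($\alpha_0^\MM(x)$ being a simple root of $f_0^\MM$, as directly verifiable from~\eqref{eq:fgasymp}). This yields a lower bound $|f_t^\MM(u^*_\MM(x,t) + \I v)| \geq c\,(t + |v|)$ for $|v| \leq v_0$ and $t$ small, whose contribution to the $v$-integral is bounded by $C\int_0^{v_0} \D v/(t+v)^\mu = \mathcal{O}(t^{1-\mu})$, using $\mu > 1$. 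For $|v| > v_0$, the uniform convergence on compacts in Lemma~\ref{lemma:fg} together with the explicit forms $|f_0^X(u_+ + \I v)| = |\sinh(\bar{\rho}\xi v/2)|/\bar{\rho}$ and $|f_0^V(2/\xi^2 + \I v)| = \xi^2|v|/2$ deliver a lower bound on $|f_t^\MM(u^*_\MM(x,t) + \I v)|$ that grows exponentially (resp.\ linearly) at infinity, so the tail contribution is bounded uniformly in $t$. Altogether, the $v$-integral is $\mathcal{O}(t^{1-\mu})$, and the full integral is $\mathcal{O}(t^\mu)\cdot\mathcal{O}(t^{1-\mu}) = \mathcal{O}(t)$, as required.
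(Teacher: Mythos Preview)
Your approach is correct and reaches the same $\mathcal{O}(t)$ bound, but it differs from the paper's route. The paper keeps the original integration variable~$u$: it simply factors $|u^*_\MM(x,t)/t + \I u|^{-1} = t\,|u^*_\MM(x,t) + \I u t|^{-1}$, and then observes (for $\MM=V$) that $\Phi^V_{x,t}(u) = \E^{-\I ux}(1+\I u d_t)^{-\mu}$ with $d_t$ converging to a nonzero constant. Splitting at $|u|=1$, the trivial bound $|\Phi|\le1$ on $|u|\le1$ together with $|\Phi^V_{x,t}(u)|\le D|u|^{-\mu}$ on $|u|>1$, and $|u^*_\MM(x,t)+\I ut|\ge|u^*_\MM(x,t)|\to\alpha_0^\MM(x)\neq0$, show that the remaining $u$-integral is $O(1)$ uniformly in small~$t$; the prefactor~$t$ then does the job. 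There is no near-origin singularity to resolve in this variable, because a characteristic function is automatically bounded by~$1$.

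Your substitution $v=ut$ and extraction of $(f_t^\MM(u^*))^\mu$ shifts all the work into the integrand $|f_t^\MM(u^*+\I v)|^{-\mu}$, which now has a near-singularity at $v=0$ of strength~$t^{-\mu}$. Your lower bound $|f_t^\MM(u^*+\I v)|\ge c(t+|v|)$ is the right tool, but two points deserve care. First, this bound requires the \emph{lower} estimate $f_t^\MM(u^*_\MM(x,t))\ge c\,t$, not merely the $\mathcal{O}(t)$ upper bound; Lemmas~\ref{lemma:fg} and~\ref{lemma:u^*smalltime} alone do not give the sign or size of the leading coefficient, and you should instead invoke the saddlepoint equation directly (as in the Remark following Lemma~\ref{Lemma:U*Asymptotics}, which gives $f_t^\MM(u^*)\sim -\mu t\,(f_0^\MM)'(\alpha_0^\MM(x))/x>0$). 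Second, for the tail in the $X$ case, Lemma~\ref{lemma:fg} asserts uniform convergence on real compacts of~$\Dd_X^o$; it does not directly control $f_t^X$ along the vertical line $u^*+\I\RR$ for unbounded~$|v|$, so your exponential lower bound there needs a direct estimate of $f_t^X$ at complex arguments rather than an appeal to that lemma. Both gaps are fillable with routine work, but the paper's route sidesteps them entirely.
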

\begin{proof}
From the change of measure~\eqref{eq:measureChange} and the re-scaled cgf
given in~\eqref{eq:cgf} we can compute
$$
\log\Phi^\MM_{x,t}(u)
  = \log\mathbb{E}^{\mathbb{P}}\left(\frac{\D\mathbb{Q}^\MM_{x,t}}{\D\PP} \exp\left(\I u  \ZZ\right)\right)
 = -\I u x+\frac{1}{t}\Big[\Lambda_{\MM}(\I u t +u^*_{\MM}(x,t),t)-\Lambda_{\MM}(u^*_{\MM}(x,t),t)\Big].
$$
Using the definition of $\Lambda_{\MM}$ in~\eqref{eq:cgfMg} then yields
\begin{equation}\label{eq:PhiMExplicit}
\Phi^\MM_{x,t}(u)=
\left(\frac{f_t^{\MM}(\I u t+u_\MM^*(x,t))}{f_t^{\MM}(u_\MM^*(x,t))}\right)^{-\mu}
\exp\left(-\I u x -\frac{2}{\mu}\left[g_t^{\MM}(u_\MM^*(x,t)+\I u t) -g_t^{\MM}(u_\MM^*(x,t)) \right]\right).
\end{equation}
Let now $\MM=V$.  Using~\eqref{eq:PhiMExplicit} we see that $\Phi^V_{x,t}(u)=\E^{-\I u x}\left(1+\I u d_t \right)^{-\mu}$ where
$d_t:=\frac{\xi^2 t (1-\E^{b t})}{2 b t+u^*_{V}(x,t)\xi^2(1-\E^{bt})}$.
It is easy to see that $\lim_{t\downarrow0}d_t = b\xi^2$.
The modulus is then given by $|\Phi^V_{x,t}(u)|=(1+u^2d_t^2)^{-\mu/2}$
and hence for small enough $t$ we have that $|\Phi^V_{x,t}(u)|\leq D |u|^{-\mu}$ for some $D>0$.
Furthermore, we easily see that
$|(u^*_{V}(x,t)+\I u t)^{-1}|\leq 1/|u^*_{V}(x,t)|$ for all $t$, and hence we compute
\begin{align*}
\int_{\RR}\left|\frac{\Phi^V_{x,t}(u)}{u^*_{V}(x,t)/t+\I u}\right| \D u
=
t \int_{\RR}\left|\frac{\Phi^V_{x,t}(u)}{u^*_{V}(x,t)+\I u t}\right| \D u
&=
t \int_{|u|\leq1}\left|\frac{\Phi^V_{x,t}(u)}{u^*_{V}(x,t)+ \I u t}\right| \D u
+ t \int_{|u|>1}\left|\frac{\Phi^V_{x,t}(u)}{u^*_{V}(x,t)+\I u t}\right| \D u
\\
&\leq t \: \frac{1}{u^*_{V}(x,t)} \Bigl(1 + D \int_{|u|>1} \frac{\D u}{|u|^{\mu}} \Bigr).
\end{align*}
The factor multiplying $t$ in the last inequality is bounded for sufficiently small $t$, since $u^*_{V}(x,t)$ converges to~$2/\xi^2$ as $t$ tends to zero, and $\mu > 1$.
The case $\MM=X$ follows from analogous, yet tedious, computations.
\end{proof}

We now move on to the proof of Lemma~\ref{lem:invfourtranszero}.
We only look at the case  $\MM=V$, the other case being completely analogous.
We denote the convolution of two functions $f,h\in L^1(\RR)$ by $(f\ast h)(x):=\int_{\RR}f(x-y)h(y) \D y$,
and recall that $(f\ast h)\in L^1(\RR)$.
For such functions, we denote the Fourier transform by
$(\mathcal{F}f)(u):=\int_{\RR}\E^{\I u x}f(x) \D x$ 
and the inverse Fourier transform by $
(\mathcal{F}^{-1}h)(x):=\frac{1}{2\pi}\int_{\RR}\E^{-\I u x}h(u) \D u.
$ 
We have that
\begin{equation}\label{eq:stripszero}
\mathcal{F}\left(g(z)\right)(u)
 := \int_{\RR}\exp\left(-\frac{u^*_{V}(x,t)z}{t}+\I u z\right)\ind_{\{z\geq 0\}}\D z
 = \left(\frac{u^*_{V}(x,t)}{t}-\I u\right)^{-1},
\end{equation}
if $u^*_{V}(x,t)>0$, which holds for small~$t$ since by Lemma~\ref{lemma:u^*smalltime} $u^*_{V}(x,t)$ converges to $\alpha_{0}^{V}$ and $\alpha_{0}^{V}>0$.
We write
$$
\EE^{\mathbb{Q}^{V}_{x,t}}\left[g(Z^{V}_{x,t})\right]
=\int_{\RR}q(x-y)p(y) \D y = (q\ast p)(x),
$$
with $q(z)\equiv g(-z)$ and $p$ denoting the density of $V_t$.
On the strips of regularity ($x>0$) we know there exists $t_0>0$ such that
$q\in L^1(\RR)$ for $t<t_0$.
Since $p$ is a density, $p\in L^1(\RR)$, and therefore 
\begin{equation}\label{eq:convzero}
\mathcal{F}(q\ast p)(u)=\mathcal{F}q(u) \mathcal{F}p(u).
\end{equation}
We note that $\mathcal{F}q(u)\equiv\mathcal{F}g(-u)\equiv\overline{\mathcal{F}g(u)}$ and hence using~\eqref{eq:stripszero}
\begin{equation}\label{eq:simpconvzero}
\mathcal{F}q(u) \mathcal{F}p(u)
\equiv\E^{\I u x}\Phi_{x,t}^{V}(u)\left(\frac{u^*_{V}(x,t)}{t}+\I u\right)^{-1},
\end{equation}
since the complex conjugate of $w^{-1}$ is equal to $(\Re(w)-\I\Im(w))^{-1}$, for $w\in\mathbb{C}$. 
Thus by Lemma~\ref{lem:L1lemZero} there exists an $t_1>0$ 
such that $\mathcal{F}q \mathcal{F}p\in L^1(\RR)$ for $t<t_1$. 
By the inversion theorem~\cite[Theorem 9.11]{R87} this then implies from~\eqref{eq:convzero} and~\eqref{eq:simpconvzero} that for $t<\min(t_0,t_1)$:
\begin{align*}
\EE^{\mathbb{Q}^{V}_{x,t}}\left[g(Z^{V}_{x,t})\right]
&= (q\ast p)(x) 
=\mathcal{F}^{-1}\left(\mathcal{F}q(u) \mathcal{F}p(u)\right)(x) \\ 
&=\frac{1}{2\pi}\int_{\RR} \E^{-\I u x}\mathcal{F}q(u) \mathcal{F}p(u) \D u
= \frac{1}{2\pi}\int_{\RR} \Phi_{x,t}^{V}(u)\left(\frac{u^*_{V}(x,t)}{t}+\I u\right)^{-1}\D u .
\end{align*}


\end{document}